\documentclass{article}

\usepackage[english]{babel}
\usepackage{graphicx,amsfonts,amssymb,amsmath,amsthm,enumitem,mathtools,accents,color}
\usepackage[top=2cm,bottom=3cm,left=2cm,right=2cm]{geometry}
\usepackage{sectsty}\sectionfont{\normalsize}\subsectionfont{\normalsize}

\newtheorem{theorem}{Theorem}[section]
\newtheorem{lemma}{Lemma}[section]
\theoremstyle{definition}
\newtheorem{definition}{Definition}[section]
\newtheorem{remark}{Remark}[section]
\newtheorem{example}{Example}[section]

\numberwithin{equation}{section}
\numberwithin{figure}{section}

\renewcommand{\epsilon}{\varepsilon}
\renewcommand{\aa}{{\boldsymbol a}}
\newcommand{\bb}{{\boldsymbol b}}
\newcommand{\ii}{{\boldsymbol i}}
\newcommand{\kk}{{\boldsymbol k}}
\newcommand{\hh}{{\boldsymbol h}}
\newcommand{\nn}{{\boldsymbol n}}
\newcommand{\xx}{{\boldsymbol x}}
\newcommand{\bu}{{\mathbf 1}}
\newcommand{\II}{{\mathcal I}}

\newcommand\rurldoi[1]{%
	\href{https://doi.org/#1}{\nolinkurl{#1}}%
}

\date{}
\begin{document}

\title{Constructive approach to the monotone rearrangement of functions\thanks{This is a preprint. For the peer reviewed and published version, see https://doi.org/10.1016/j.exmath.2021.10.004}}
\author{Giovanni Barbarino\\[-2pt]
\footnotesize Department of Mathematics and Systems Analysis, Aalto University, Espoo, Finland (giovanni.barbarino@aalto.fi)\\[7pt]
Davide Bianchi\\[-2pt]
\footnotesize School of Science, Harbin Institute of Technology (Shenzhen), Shenzhen, China (bianchi@hit.edu.cn)\\[7pt]
Carlo Garoni\\[-2pt]
\footnotesize Department of Mathematics, University of Rome Tor Vergata, Rome, Italy (garoni@mat.uniroma2.it)}

\maketitle

\begin{abstract}
We detail a simple procedure (easily convertible to an algorithm) for constructing from quasi-uniform samples of $f$ a sequence of linear spline functions converging to the monotone rearrangement of $f$, in the case where $f$ is an almost everywhere continuous function defined on a bounded set $\Omega$ with negligible boundary. 
Under additional assumptions on $f$ and $\Omega$, we prove that the convergence of the sequence is uniform.
We also show that the same procedure applies to arbitrary measurable functions too, but with the substantial difference that in this case the procedure has only a theoretical interest and cannot be converted to an algorithm.

\smallskip

\noindent{\em Keywords:} monotone rearrangement, quantile function, generalized inverse distribution function, almost everywhere continuous functions, asymptotically uniform grids, quasi-uniform samples, uniform convergence

\smallskip

\noindent{\em 2020 MSC:} 28A20, 46E30, 60E05
\end{abstract}


\section{Introduction}

The theory of equimeasurable rearrangements of functions experienced its first major development soon after the seminal works by Hardy, Littlewood and P\'{o}lya \cite{HLP1,HLP2}.
Nowadays, the literature on this topic is incredibly vast; see, for example, 
\cite{Baernstein1,Baernstein2,quantile1,quantile3,Brenier,BS,Chiti,Chong-Rice,CRZ,Douglas1,Douglas2,Fristedt,Kawohl,Kesavan,Kolyada,Monti,Talenti1,Talenti2} 
for a look at some of the best contributions, without claiming completeness. We invite the interested reader to consult the remarkable 
review by Talenti~\cite{Talenti3} and the references therein.

In the last two decades, the theory of generalized locally Toeplitz (GLT) sequences, which originated from the pioneering papers by Tilli \cite{Tilli} and Serra-Capizzano \cite{glt_1,glt_2},
experienced a considerable development due to its numerous applications; see the recent books \cite{GLTbookIII,GLTbookIV,GLTbookI,GLTbookII}. The theory of GLT sequences is a powerful apparatus for computing the asymptotic spectral distribution of matrices $A_n$ arising from the discretization of linear differential equations. In particular, this theory allows for the computation of the so-called spectral symbol, i.e., the function describing the asymptotic spectrum of $A_n$ as the mesh fineness parameter $n$ diverges. It turns out that the monotone rearrangement of the symbol describes the asymptotic spectrum of $A_n$ even better than the symbol itself \cite{graphsI,bianchi,bianchi0}. For application purposes, it is then necessary to have a simple procedure for constructing the monotone rearrangement of a function from the function itself. 

In the main result of this paper (Theorem~\ref{th:main}), we detail a simple procedure (easily convertible to an algorithm) for constructing from quasi-uniform samples of $f$ a sequence of linear spline functions converging to the monotone rearrangement of $f$, in the case where $f$ is an almost everywhere continuous function defined on a bounded set $\Omega$ with negligible boundary. We also show that the convergence of the sequence is uniform as long as $f$ and $\Omega$ satisfy some additional assumptions (see Theorem~\ref{th:main'}). The construction presented in Theorem~\ref{th:main} has been largely employed within the theory of GLT sequences, although in a non-rigorous way and without a mathematical formalization; see \cite{graphsI,barbarinoDM,GLTbookIII,FE-DG,bianchi,bianchi0,MariaLucia,ALIF,axioms,GLTbookI,Tom-paper}. In this regard, Theorem~\ref{th:main} represents the first rigorous derivation (and generalization) of something that has been used so far in a heuristic way. 

After proving Theorems~\ref{th:main} and~\ref{th:main'} in Section~\ref{sec:main}, we discuss in Section~\ref{wagmf} the case of arbitrary measurable functions. For a function of this kind, a simple construction of the monotone rearrangement from quasi-uniform samples is clearly impossible, because the samples of the function may have nothing to do with the function itself. Nevertheless, we prove in Theorem~\ref{th:main2} that for any measurable function $f$ there exist ``good'' quasi-uniform samples of $f$ for which the same procedure as described in Theorem~\ref{th:main} yields the monotone rearrangement of $f$. Of course, Theorem~\ref{th:main2} has only a theoretical interest, 
since a recipe for finding these ``good'' quasi-uniform samples does not exist.

\section{Monotone rearrangement}\label{sec:mr}

In this section, we recall the notion of monotone rearrangement and collect some basic properties.
We denote by $C_c(\mathbb R)$ the space of continuous functions $F:\mathbb R\to\mathbb R$ with compact support, 
by $\chi_E$ the characteristic (indicator) function of the set $E$, and by $\mu_d$ the Lebesgue measure in $\mathbb R^d$. All the terminology coming from measure theory (such as ``measurable set'', ``measurable function'', ``a.e.'', etc.)\ always refers to the Lebesgue measure. Throughout this paper, we use a notation borrowed from probability theory to indicate sets. For example, if $f,g:E\subseteq\mathbb R^d\to\mathbb R$, then $\{f\le1\}=\{\xx\in E:f(\xx)\le1\}$, $\mu_d\{f>0,\:g<0\}$ is the measure of the set $\{\xx\in E:f(\xx)>0,\:g(\xx)<0\}$, etc. 

\begin{definition}
Let $f:\Omega\subset\mathbb R^d\to\mathbb R$ be measurable on a set $\Omega$ with $0<\mu_d(\Omega)<\infty$. The monotone rearrangement of $f$ is the function denoted by $f^\dag$ and defined as follows:
\begin{equation}\label{crv}
f^\dag:(0,1)\to\mathbb R,\qquad f^\dag(y)=\inf\biggl\{u\in\mathbb R:\frac{\mu_d\{f\le u\}}{\mu_d(\Omega)}\ge y\biggr\}.
\end{equation}
\end{definition}

Note that $f^\dag(y)$ is a well-defined real number for every $y\in(0,1)$, because
\[ \lim_{u\to+\infty}\mu_d\{f\le u\}=\mu_d(\Omega),\qquad\lim_{u\to-\infty}\mu_d\{f\le u\}=0. \]
In probability theory, where $f$ is interpreted as a random variable on $\Omega$ with probability distribution $m_f$ and distribution function $F_f$ given by
\begin{align*}
m_f(A)&=\mathbb P\{f\in A\}=\frac{\mu_d\{f\in A\}}{\mu_d(\Omega)},\qquad A\subseteq\mathbb R\mbox{ is a Borel set},\\
F_f(u)&=\mathbb P\{f\le u\}=\frac{\mu_d\{f\le u\}}{\mu_d(\Omega)},\qquad u\in\mathbb R,
\end{align*}
the monotone rearrangement $f^\dag$ in \eqref{crv} can be rewritten as
\[ f^\dag(y)=\inf\bigl\{u\in\mathbb R:F_f(u)\ge y\bigr\},\qquad y\in(0,1), \]
and is referred to as the quantile function of $f$ or the generalized inverse of $F_f$.
The next lemma collects some basic properties of monotone rearrangements; see, e.g., \cite[Chapter~3, Proposition~4 and Problem~3]{Fristedt} for the first two statements and \cite[Chapter~14, Proposition~7]{Fristedt} for the last one. 

\begin{lemma}\label{lemma_completo}
Let $f:\Omega\subset\mathbb R^d\to\mathbb R$ be measurable on a set $\Omega$ with $0<\mu_d(\Omega)<\infty$. 
\begin{itemize}[nolistsep,leftmargin=*]
	\item $f^\dag$ is monotone non-decreasing and left-continuous on $(0,1)$.
	\item For every Borel set $A\subseteq\mathbb R$, we have
	\[ \mu_1\{f^\dag\in A\}=\dfrac{\mu_d\{f\in A\}}{\mu_d(\Omega)}. \] 
	In particular,
	\begin{itemize}[nolistsep,leftmargin=*]
		\item if $f$ is bounded from below then $\inf_{y\in(0,1)}f^\dag(y)=\lim_{y\to0}f^\dag(y)\in\mathbb R$, 
		\item if $f$ is bounded from above then $\sup_{y\in(0,1)}f^\dag(y)=\lim_{y\to1}f^\dag(y)\in\mathbb R$.
	\end{itemize}
	\item For every continuous bounded function $F:\mathbb R\to\mathbb R$, we have
	\[ \frac1{\mu_d(\Omega)}\int_\Omega F(f(\xx)){\rm d}\xx=\int_0^1F(f^\dag(y)){\rm d}y. 
	\]
\end{itemize}
\end{lemma}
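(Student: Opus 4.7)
The plan is to analyze $f^\dag$ through the distribution function $F_f(u)=\mu_d\{f\le u\}/\mu_d(\Omega)$, which is non-decreasing, right-continuous, and takes values in $[0,1]$. The identity tying $f^\dag$ to $F_f$ is
\[
f^\dag(y)\le u \iff F_f(u)\ge y,\qquad u\in\mathbb R,\ y\in(0,1),
\]
and follows from the fact that the set $\{u'\in\mathbb R:F_f(u')\ge y\}$ is a closed half-line of the form $[f^\dag(y),+\infty)$ (non-emptiness from $F_f(+\infty)=1>y$, boundedness below from $F_f(-\infty)=0<y$, closedness from right-continuity of $F_f$ applied to a sequence $u_n\downarrow f^\dag(y)$ inside the set). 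The three bullets then follow essentially mechanically.

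For the first bullet, monotonicity is immediate: if $y_1\le y_2$, then $\{u:F_f(u)\ge y_2\}\subseteq\{u:F_f(u)\ge y_1\}$, so $f^\dag(y_1)\le f^\dag(y_2)$. For left-continuity at $y\in(0,1)$, I take $y_n\uparrow y$ and set $L=\lim_n f^\dag(y_n)\le f^\dag(y)$. Applying the key identity with $u=f^\dag(y_n)$ gives $F_f(f^\dag(y_n))\ge y_n$; combining with monotonicity of $F_f$ and $f^\dag(y_n)\le L$ yields $F_f(L)\ge y_n$, and letting $n\to\infty$ one obtains $F_f(L)\ge y$, hence $L\ge f^\dag(y)$ by the identity once more.

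For the second bullet, specializing the key identity to arbitrary $u\in\mathbb R$ yields
\[
\mu_1\{f^\dag\le u\}=\mu_1\{y\in(0,1):y\le F_f(u)\}=F_f(u)=\mu_d\{f\le u\}/\mu_d(\Omega),
\]
so the push-forward measures $\mu_1\circ(f^\dag)^{-1}$ (of Lebesgue measure on $(0,1)$) and $\mu_d\circ f^{-1}/\mu_d(\Omega)$ agree on every half-line $(-\infty,u]$. Being Borel probability measures on $\mathbb R$, they coincide on all Borel sets by the standard uniqueness-of-measure argument. The ``in particular'' parts then follow: if $f\ge -M$, the common measure is concentrated on $[-M,+\infty)$, so $\mu_1\{f^\dag<-M\}=0$; but the non-decreasing character of $f^\dag$ forces $\{f^\dag<-M\}$ to be a left-interval of $(0,1)$, hence empty, and thus $\inf_{(0,1)}f^\dag=\lim_{y\to 0}f^\dag(y)\ge -M$ is finite. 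The bounded-above case is symmetric.

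The third bullet then follows directly from the equality of the two push-forward measures, together with the standard integration-with-respect-to-push-forward formula:
\[
\frac1{\mu_d(\Omega)}\int_\Omega F(f(\xx))\,{\rm d}\xx
=\int_{\mathbb R}F\,{\rm d}\bigl(\mu_d\circ f^{-1}/\mu_d(\Omega)\bigr)
=\int_{\mathbb R}F\,{\rm d}\bigl(\mu_1\circ(f^\dag)^{-1}\bigr)
=\int_0^1 F(f^\dag(y))\,{\rm d}y,
\]
valid for any bounded Borel $F$ (in particular continuous $F$; boundedness guarantees integrability against the two finite measures). The only genuinely delicate step of the whole argument is the left-continuity proof in the first bullet, where right-continuity of $F_f$ must be deployed precisely; everything else amounts to bookkeeping with the key identity and the uniqueness of Borel probability measures.
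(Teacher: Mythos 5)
The paper does not prove this lemma: it merely points to Fristedt and Gray for the first two bullets and the last one, so there is no in-text argument against which to compare. Your proof is correct and self-contained, and it uses the standard approach that the cited reference almost certainly follows: establish the Galois connection $f^\dag(y)\le u\iff F_f(u)\ge y$, which is exactly the statement that $\{u:F_f(u)\ge y\}$ is the closed half-line $[f^\dag(y),+\infty)$, and then read off monotonicity, left-continuity, the push-forward identity (first on half-lines, then on all Borel sets via the $\pi$-system uniqueness theorem), and the change-of-variables formula.

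Two small points worth making explicit, though neither is a gap. First, when you argue that $\{u:F_f(u)\ge y\}$ is a half-line, you list nonemptiness, boundedness below, and closedness, but the fact that it is an \emph{up-set} (so a half-line at all) requires invoking monotonicity of $F_f$; you use this implicitly but do not say it. Second, right-continuity of $F_f$ itself needs a one-line justification from continuity of measure from above, and this is precisely where the hypothesis $\mu_d(\Omega)<\infty$ enters. With those two sentences added, the argument is airtight and supplies the proof the paper leaves to the literature.
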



Lemma~\ref{g_cont} shows that the monotone rearrangement of a continuous function $f$ is continuous provided the domain of $f$ is not ``too wild''.
For the proof of Lemma~\ref{g_cont}, we need the following topological result. If $\xx\in\mathbb R^d$ and $r>0$, we denote by $D(\xx,r)$ the open $d$-dimensional disk with center $\xx$ and radius $r$. The closure and the interior of a set $\Omega\subseteq\mathbb R^d$ are denoted by $\overline\Omega$ and $\accentset{\circ}\Omega$, respectively.

\begin{lemma}\label{int_clos}
Let $\Omega\subseteq\mathbb R^d$ be a set contained in the closure of its interior.
Suppose that $U\subseteq\Omega$ is non-empty and open in $\Omega$.
Then $\mu_d(U)>0$.
\end{lemma}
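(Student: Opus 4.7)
The plan is to produce, inside $U$, an honest $d$-dimensional open disk of $\mathbb{R}^d$; once this is achieved, the conclusion $\mu_d(U)>0$ is immediate since disks have positive Lebesgue measure. The two hypotheses on $\Omega$ and $U$ have to be combined in the right order: the relative-openness of $U$ in $\Omega$ gives me a disk of $\mathbb{R}^d$ intersected with $\Omega$, but this intersection could a priori have zero $d$-dimensional measure; the hypothesis $\Omega\subseteq\overline{\accentset{\circ}\Omega}$ is precisely what rules this out.

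Concretely, I would start by picking any $\xx\in U$. Since $U$ is open in $\Omega$, there exist an open set $V\subseteq\mathbb{R}^d$ and a radius $r>0$ with $D(\xx,r)\cap\Omega\subseteq V\cap\Omega=U$. The point $\xx$ lies in $\Omega$, hence in $\overline{\accentset{\circ}\Omega}$ by hypothesis, so the open disk $D(\xx,r)$ must meet $\accentset{\circ}\Omega$: choose $\yy\in D(\xx,r)\cap\accentset{\circ}\Omega$. Now I use that $D(\xx,r)$ is open and $\accentset{\circ}\Omega$ is open to pick $s>0$ small enough that $D(\yy,s)\subseteq D(\xx,r)\cap\accentset{\circ}\Omega$.

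It follows that $D(\yy,s)\subseteq\Omega$ and $D(\yy,s)\subseteq D(\xx,r)$, so $D(\yy,s)\subseteq D(\xx,r)\cap\Omega\subseteq U$. Therefore $\mu_d(U)\ge\mu_d(D(\yy,s))>0$, which is what we wanted.

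There is really no serious obstacle here; the one subtle point — and the reason the hypothesis $\Omega\subseteq\overline{\accentset{\circ}\Omega}$ cannot be dropped — is that a generic point of $\Omega$ may fail to be an interior point of $\Omega$, so $D(\xx,r)\cap\Omega$ need not contain a full disk around $\xx$. The trick is to move from $\xx$ to a nearby point $\yy$ that is an interior point of $\Omega$, which is exactly what the "closure of the interior" condition guarantees.
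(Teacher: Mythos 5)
Your proof is correct and follows essentially the same route as the paper: fix $\xx\in U$, use relative openness to get a disk $D(\xx,r)$ with $D(\xx,r)\cap\Omega\subseteq U$, then invoke $\Omega\subseteq\overline{\accentset{\circ}\Omega}$ to conclude that $D(\xx,r)\cap\accentset{\circ}\Omega$ is a non-empty open subset of $U$. The only cosmetic difference is that you explicitly exhibit a smaller disk inside this open set, whereas the paper stops at the observation that any non-empty open subset of $\mathbb R^d$ has positive Lebesgue measure.
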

\begin{proof}
Take a point $\xx\in U$. Since $U$ is open in $\Omega$, there exists a set $V$ open in $\mathbb R^d$ such that $U=V\cap\Omega$. In particular, we can find $\epsilon>0$ such that $D(\xx,\epsilon)\subseteq V$ and $D(\xx,\epsilon)\cap\Omega\subseteq U$. Now,
\begin{align*}
\xx\in U\subseteq\Omega\subseteq\overline{\accentset{\circ}\Omega}&\quad\implies\quad\xx\in\overline{\accentset{\circ}{\Omega}}\quad\implies\quad D(\xx,\epsilon)\cap\accentset{\circ}\Omega\ne\emptyset.
\end{align*}
We have then found a set $D(\xx,\epsilon)\cap\accentset{\circ}\Omega$, which is non-empty, open in $\mathbb R^d$, and contained in $U$. The measure of this set is clearly positive just like the measure of any non-empty open set in $\mathbb R^d$. We conclude that $\mu_d(U)>0$.
\end{proof}

\begin{lemma}\label{g_cont}
Let $f:\Omega\subset\mathbb R^d\to\mathbb R$ be continuous on a set $\Omega$ with $0<\mu_d(\Omega)<\infty$. Suppose that $\Omega$ is connected and contained in the closure of its interior. Then, the monotone rearrangement $f^\dag$ is continuous on $(0,1)$.
\end{lemma}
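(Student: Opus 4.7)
The plan is to rule out discontinuities of $f^\dagger$ directly from the definition, using connectedness plus Lemma~\ref{int_clos} to control the image of $f$. Since $f^\dagger$ is already known to be monotone non-decreasing and left-continuous (Lemma~\ref{lemma_completo}), any discontinuity at a point $y_0\in(0,1)$ must be a jump $\lim_{y\to y_0^+}f^\dagger(y)=u_2>u_1=f^\dagger(y_0)$.

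From the definition of $f^\dagger$ as the generalized inverse of $F_f$, I first translate this jump into a flat piece of $F_f$. The right-continuity of $F_f$ (which follows from continuity from above of $\mu_d$ on the nested sets $\{f\le u+1/n\}$) gives $F_f(u_1)\ge y_0$, while the fact that $f^\dagger(y)\ge u_2$ for every $y>y_0$ gives $F_f(u)\le y_0$ for $u<u_2$. Combined, $F_f\equiv y_0$ on $[u_1,u_2)$, and hence
\[ \mu_d\{u_1<f<u_2\}=0. \]

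The next step is to exploit continuity of $f$ together with Lemma~\ref{int_clos}. For every $v_1,v_2$ with $u_1<v_1<v_2<u_2$, the set $\{v_1<f<v_2\}=f^{-1}((v_1,v_2))$ is open in $\Omega$ and has zero measure; by Lemma~\ref{int_clos} it must be empty. Letting $v_1\to u_1^+$ and $v_2\to u_2^-$ yields $f(\Omega)\cap(u_1,u_2)=\emptyset$.

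Finally, I invoke the connectedness of $\Omega$: since $f$ is continuous, $f(\Omega)$ is a connected subset of $\mathbb R$, hence an interval, and so it must lie entirely in one of the two components of $\mathbb R\setminus(u_1,u_2)$. If $f(\Omega)\subseteq(-\infty,u_1]$, then $F_f(u_1)=1$, forcing $y_0=1$; if $f(\Omega)\subseteq[u_2,\infty)$, then $F_f(u_1)=0$, forcing $y_0=0$. Either conclusion contradicts $y_0\in(0,1)$, so no jump can exist and $f^\dagger$ is continuous. The only subtle point is the measure-theoretic bookkeeping in the first step (matching the left-continuity of $f^\dagger$ with the right-continuity of $F_f$ to pin down the flat level exactly at $y_0$); the geometric punchline via Lemma~\ref{int_clos} and connectedness is then straightforward.
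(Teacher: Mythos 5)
Your proof is correct and rests on the same two pillars as the paper's argument: Lemma~\ref{int_clos} (to upgrade ``measure zero'' to ``empty'' for the open preimage of the jump gap) and connectedness of $\Omega$. The only differences are cosmetic. Where the paper cites the equimeasurability statement of Lemma~\ref{lemma_completo} to get $\mu_d\{f^\dag(y_0)<f<f^\dag_+(y_0)\}=0$ together with $\mu_d\{f\le f^\dag(y_0)\}>0$ and $\mu_d\{f\ge f^\dag_+(y_0)\}>0$, you re-derive the same measure facts directly from the definition of $f^\dag$ and the right-continuity of $F_f$, pinning $F_f\equiv y_0$ on $[u_1,u_2)$. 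And where the paper phrases the connectedness contradiction in the ``preimage'' form (a decomposition of $\Omega$ into two nonempty disjoint closed-in-$\Omega$ sets), you phrase it in the equivalent ``image'' form ($f(\Omega)$ is an interval that misses $(u_1,u_2)$, forcing $F_f(u_1)\in\{0,1\}$). One small stylistic simplification you could make: once you know $\mu_d\{u_1<f<u_2\}=0$, the set $\{u_1<f<u_2\}=f^{-1}((u_1,u_2))$ is already open in $\Omega$, so Lemma~\ref{int_clos} applies to it directly; the auxiliary $v_1,v_2$ and the limiting step are unnecessary, though not wrong.
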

\begin{proof}

\begin{figure}
\centering
\includegraphics[width=0.40\textwidth]{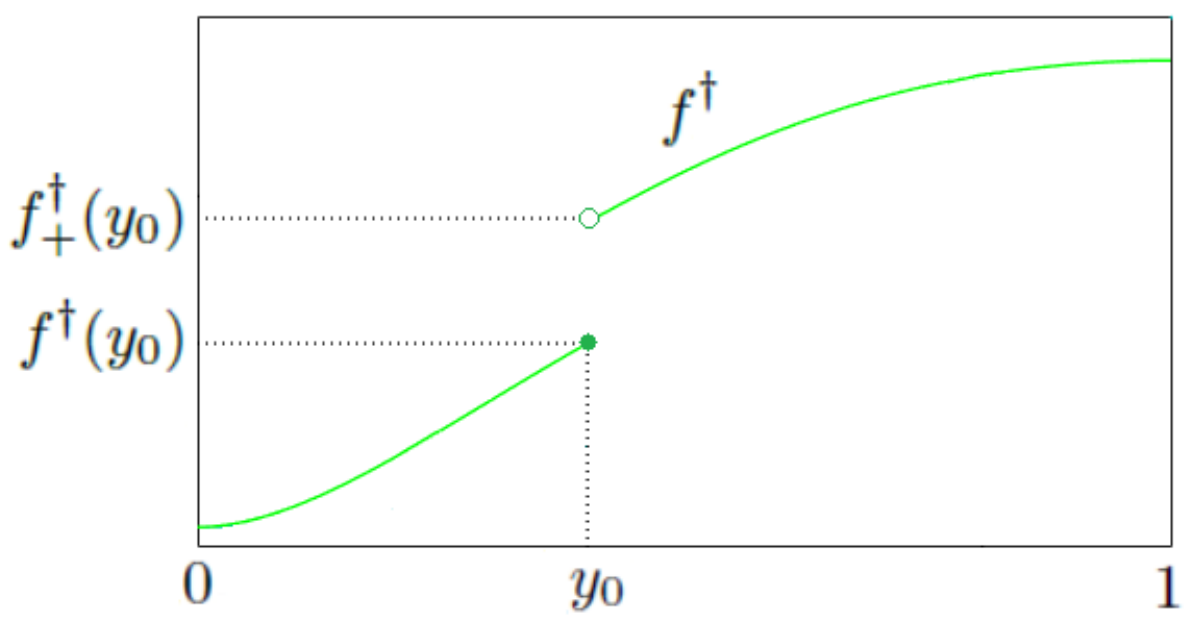}
\caption{Illustration for the proof of Lemma~\ref{g_cont}.}
\label{g_y0_discontinuo}
\end{figure}

Suppose by contradiction that $f^\dag$ is not continuous at a point $y_0\in(0,1)$. By monotonicity, $y_0$ is a jump discontinuity of $f^\dag$. Recalling that $f^\dag$ is left-continuous, we have
\[ f^\dag_-(y_0)=f^\dag(y_0)<f^\dag_+(y_0), \]
where $f^\dag_-(y_0)$ and $f^\dag_+(y_0)$ are, respectively, the left- and right-limit of $f^\dag$ in $y_0$; see Figure~\ref{g_y0_discontinuo}. By Lemma~\ref{lemma_completo},
\begin{alignat*}{5}
0&<y_0=\mu_1\{f^\dag\le f^\dag(y_0)\}=\frac{\mu_d\{f\le f^\dag(y_0)\}}{\mu_d(\Omega)} &&\quad\implies\quad \mu_d\{f\le f^\dag(y_0)\}>0,\\
0&<1-y_0=\mu_1\{f^\dag\ge f^\dag_+(y_0)\}=\frac{\mu_d\{f\ge f^\dag_+(y_0)\}}{\mu_d(\Omega)} &&\quad\implies\quad \mu_d\{f\ge f^\dag_+(y_0)\}>0,\\
0&=\mu_1\{f^\dag(y_0)<f^\dag<f^\dag_+(y_0)\}=\frac{\mu_d\{f^\dag(y_0)<f<f^\dag_+(y_0)\}}{\mu_d(\Omega)} &&\quad\implies\quad \mu_d\{f^\dag(y_0)<f<f^\dag_+(y_0)\}=0.
\end{alignat*}
Since $f$ is continuous on $\Omega$,
\begin{itemize}[nolistsep,leftmargin=*]
	\item $\{f\le f^\dag(y_0)\}=f^{-1}((-\infty,f^\dag(y_0)])$ is closed in $\Omega$,
	\item $\{f\ge f^\dag_+(y_0)\}=f^{-1}([f^\dag_+(y_0),\infty))$ is closed in $\Omega$,
	\item $\{f^\dag(y_0)<f<f^\dag_+(y_0)\}=f^{-1}((f^\dag(y_0),f^\dag_+(y_0)))$ is open in $\Omega$.
\end{itemize}
Moreover, it is clear that the sets $\{f\le f^\dag(y_0)\}$, $\{f\ge f^\dag_+(y_0)\}$, $\{f^\dag(y_0)<f<f^\dag_+(y_0)\}$ are pairwise disjoint and
\[ \{f\le f^\dag(y_0)\}\cup\{f\ge f^\dag_+(y_0)\}\cup\{f^\dag(y_0)<f<f^\dag_+(y_0)\}=\Omega. \]
Since $\{f\le f^\dag(y_0)\}$ and $\{f\ge f^\dag_+(y_0)\}$ are non-empty (because they have positive measure) and $\{f^\dag(y_0)<f<f^\dag_+(y_0)\}$ is empty (because of Lemma~\ref{int_clos}), we conclude that $\Omega$ is not connected (a contradiction). 
\end{proof}

\section{Constructive rearrangement of a.e.\ continuous functions}\label{sec:main}
In our main result (Theorem~\ref{th:main}), we detail a simple procedure for constructing from quasi-uniform samples of $f$ an easy-to-manage sequence of linear spline functions $f_\nn^\dag$ converging a.e.\ to the monotone rearrangement $f^\dag$, 
in the case where $f$ is an a.e.\ continuous function defined on a bounded set $\Omega$ with $\mu_d(\Omega)>\mu_d(\partial\Omega)=0$. Functions of this kind essentially exhaust both the class of functions that one may encounter in real-world applications and the class of functions for which a construction of the monotone rearrangement from the samples of the function makes sense; see also Section~\ref{wagmf}. After proving Theorem~\ref{th:main}, we show in Theorem~\ref{th:main'} that the convergence of $f_\nn^\dag$ to $f^\dag$ is uniform whenever $f$ is continuous and bounded on $\Omega$ and $\Omega$ is connected and contained in the closure of its interior (as in Lemma~\ref{g_cont}).
Before moving on, we introduce some necessary notations. 

\bigskip

\noindent\textbf{Multi-index notation.}
A multi-index $\ii$ of size $d$, also called a $d$-index, is a vector in $\mathbb Z^d$. 
$\mathbf0,\,\bu,\,\mathbf2,\,\ldots$ are the vectors of all zeros, all ones, all twos, $\ldots$ (their size will be clear from the context). For any vector $\nn\in\mathbb R^d$, we set $N(\nn)=\prod_{j=1}^dn_j$ and we write $\nn\to\infty$ to indicate that $\min(\nn)\to\infty$. 
If $\hh,\kk\in\mathbb R^d$, an inequality such as $\hh\le\kk$ means that $h_j\le k_j$ for all $j=1,\ldots,d$.
If $\hh,\kk$ are $d$-indices such that $\hh\le\kk$, the $d$-index range $\{\hh,\ldots,\kk\}$ is the set $\{\ii\in\mathbb Z^d:\hh\le\ii\le\kk\}$. We assume for this set the standard lexicographic ordering:
\[ \Bigl[\ \ldots\ \bigl[\ [\ (i_1,\ldots,i_d)\ ]_{i_d=h_d,\ldots,k_d}\ \bigr]_{i_{d-1}=h_{d-1},\ldots,k_{d-1}}\ \ldots\ \Bigr]_{i_1=h_1,\ldots,k_1}. \]
For instance, in the case $d=2$ the ordering is
\begin{align*}
&(h_1,h_2),\,(h_1,h_2+1),\,\ldots,\,(h_1,k_2),\,(h_1+1,h_2),\,(h_1+1,h_2+1),\,\ldots,\,(h_1+1,k_2),\\
&\ldots\,\ldots\,\ldots,\,(k_1,h_2),\,(k_1,h_2+1),\,\ldots,\,(k_1,k_2).
\end{align*}
When a $d$-index $\ii$ varies in a $d$-index range $\{\hh,\ldots,\kk\}$ (this is often written as $\ii=\hh,\ldots,\kk$), it is understood that $\ii$ varies from $\hh$ to $\kk$ following the lexicographic ordering. If $\hh,\kk$ are $d$-indices with $\hh\le\kk$, the notation $\sum_{\ii=\hh}^\kk$ indicates the summation over all $\ii$ in $\{\hh,\ldots,\kk\}$.
Operations involving $d$-indices (or general vectors with $d$ components) that have no meaning in the vector space $\mathbb R^d$ must always be interpreted in the componentwise sense. For instance, $\boldsymbol j\hh=(j_1h_1,\ldots,j_dh_d)$, $\ii/\boldsymbol j=(i_1/j_1,\ldots,i_d/j_d)$, etc.

\bigskip

\noindent\textbf{Asymptotically uniform grids.}
If $\aa,\bb\in\mathbb R^d$ with $\aa\le\bb$, we denote by $(\aa,\bb]$ the $d$-dimensional rectangle $(a_1,b_1]\times\cdots\times(a_d,b_d]$. Similar meanings have the notations for the open $d$-dimensional rectangle $(\aa,\bb)$ and the closed $d$-dimensional rectangle $[\aa,\bb]$.
Let $[\aa,\bb]$ be a $d$-dimensional rectangle, let $\nn\in\mathbb N^d$, and let $\mathcal G_\nn=\{\xx_{\ii,\nn}\}_{\ii=\bu,\ldots,\nn}$ be a sequence of $N(\nn)$ grid points in $\mathbb R^d$. We say that the grid $\mathcal G_\nn$ is asymptotically uniform (a.u.)\ in $[\aa,\bb]$ if
\[ \lim_{\nn\to\infty}\biggl(\max_{\ii=\bu,\ldots,\nn}\left\|\xx_{\ii,\nn}-\Bigl(\aa+\ii\,\frac{\bb-\aa}\nn\Bigr)\right\|_\infty\biggr)=0, \]
where $\|\xx\|_\infty=\max(|x_1|,\ldots,|x_d|)$ for every $\xx\in\mathbb R^d$.

\bigskip

\noindent\textbf{Standard partitions.} Let $[\aa,\bb]$ be a $d$-dimensional rectangle and let $\nn\in\mathbb N^d$. We say that $\{I_{\ii,\nn}\}_{\ii=\bu,\ldots,\nn}$ is a standard partition of $[\aa,\bb]$ if it is a partition of $[\aa,\bb]$ such that
\[ \biggl(\aa+(\ii-\bu)\frac{\bb-\aa}{\nn},\aa+\ii\frac{\bb-\aa}{\nn}\biggr)\subseteq I_{\ii,\nn}\subseteq\biggl[\aa+(\ii-\bu)\frac{\bb-\aa}{\nn},\aa+\ii\frac{\bb-\aa}{\nn}\biggr],\qquad \ii=\bu,\ldots,\nn. \]
If $\{I_{\ii,\nn}\}_{\ii=\bu,\ldots,\nn}$ is a standard partition of $[\aa,\bb]$ and $\xx_{\ii,\nn}\in I_{\ii,\nn}$ for all $\ii=\bu,\ldots,\nn$, then $\mathcal G_\nn=\{\xx_{\ii,\nn}\}_{\ii=\bu,\ldots,\nn}$ is an a.u.\ grid in $[\aa,\bb]$.

\bigskip

\noindent\textbf{Regular sets.}
We say that $\Omega\subset\mathbb R^d$ is a regular set if it is bounded and $\mu_d(\partial\Omega)=0$. Note that the condition ``$\mu_d(\partial\Omega)=0$'' is equivalent to ``$\chi_\Omega$ is continuous a.e.\ on $\mathbb R^d$''. Any regular set $\Omega\subset\mathbb R^d$ is measurable and we have $\mu_d(\Omega)=\mu_d(\accentset{\circ}\Omega)=\mu_d(\overline\Omega)<\infty$. 

\bigskip

\noindent\textbf{Statement and proof of the main result.}
With the notations introduced in the previous paragraphs, we are now ready to state our main result.

\begin{theorem}\label{th:main}
Let $f:\Omega\subset\mathbb R^d\to\mathbb R$ be continuous a.e.\ on the regular set $\Omega$ with $\mu_d(\Omega)>0$. 
Take any $d$-dimensional rectangle $[\aa,\bb]$ containing $\Omega$ and any a.u.\ grid $\mathcal G_\nn=\{\xx_{\ii,\nn}\}_{\ii=\bu,\ldots,\nn}$ in $[\aa,\bb]$. 
For each $\nn\in\mathbb N^d$, consider the samples
\begin{equation*}
f(\xx_{\ii,\nn}),\qquad\ii\in\II_\nn(\Omega)=\{\ii\in\{\bu,\ldots,\nn\}:\xx_{\ii,\nn}\in\Omega\},
\end{equation*}
sort them in non-decreasing order, and put them into a vector $(s_0,\ldots,s_{\omega(\nn)})$, where $\omega(\nn)=\#\II_\nn(\Omega)-1$.
Let $f^\dag_\nn:[0,1]\to\mathbb R$ be the linear spline function that interpolates the samples $(s_0,\ldots,s_{\omega(\nn)})$ over the equally spaced nodes $(0,\frac{1}{\omega(\nn)},\frac{2}{\omega(\nn)},\ldots,1)$ in $[0,1]$. Then,
\[ \lim_{\nn\to\infty} f^\dag_\nn(y)=f^\dag(y) \] 
for every continuity point $y$ of $f^\dag$. In particular, $f^\dag_\nn\to f^\dag$ a.e.\ in $(0,1)$.
\end{theorem}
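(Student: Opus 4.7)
My plan is to pass through the empirical cumulative distribution function of the samples,
\[
F_\nn(u) = \frac{\#\{\ii \in \II_\nn(\Omega) : f(\xx_{\ii,\nn}) \le u\}}{\omega(\nn)+1},
\]
first showing that $F_\nn(u) \to F_f(u)$ at every continuity point $u$ of $F_f$, and then using a classical Polya-type argument to transfer this into pointwise convergence of the associated quantiles.

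For the CDF convergence, the key observation is that if $u$ is a continuity point of $F_f$ then $\mu_d\{f=u\}=0$. Define $G_u:[\aa,\bb]\to\mathbb R$ by $G_u(\xx)=\chi_{\{f\le u\}}(\xx)$ for $\xx\in\Omega$ and $G_u(\xx)=0$ otherwise. Its discontinuity set is contained in $\partial\Omega\cup\{f=u\}\cup D_f$, where $D_f\subseteq\Omega$ is the (measure-zero) set of discontinuity points of $f$. All three pieces have $d$-dimensional Lebesgue measure zero, so $G_u$ (and likewise $\chi_\Omega$) is Riemann integrable on $[\aa,\bb]$. Invoking the standard fact that an a.u.\ grid in $[\aa,\bb]$ equidistributes any Riemann integrable function over $[\aa,\bb]$, both $\frac{1}{N(\nn)}\sum_{\ii=\bu}^\nn G_u(\xx_{\ii,\nn})$ and $\frac{1}{N(\nn)}\#\II_\nn(\Omega)$ converge to their continuous analogues, and taking the ratio yields $F_\nn(u)\to F_f(u)$.

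To transfer to quantiles, fix a continuity point $y$ of $f^\dag$ and $\epsilon>0$. Since $F_f$ has at most countably many discontinuities, I pick continuity points $u^\pm$ of $F_f$ with $f^\dag(y)-\epsilon<u^-<f^\dag(y)<u^+<f^\dag(y)+\epsilon$. From the identity $f^\dag(y)=\inf\{u:F_f(u)\ge y\}$, the inequality $u^-<f^\dag(y)$ forces $F_f(u^-)<y$; on the other side, continuity of $f^\dag$ at $y$ gives $f^\dag(y^+)=f^\dag(y)<u^+$, so $f^\dag(y')<u^+$ for some $y'>y$ and hence $F_f(u^+)\ge y'>y$. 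By the previous step, $F_\nn(u^\pm)$ satisfies the same strict inequalities for all large $\nn$.

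It remains to convert the bounds $F_\nn(u^-)<y<F_\nn(u^+)$ into bounds on the linear spline $f_\nn^\dag(y)$. From $F_\nn(u^+)>y+\delta$ (for some $\delta>0$ and all large $\nn$) and the definition of $F_\nn$, at least $(y+\delta)(\omega(\nn)+1)$ of the sorted samples lie at or below $u^+$, so $s_k\le u^+$ for some index $k$ with $k/\omega(\nn)>y$ (once $\omega(\nn)$ is large enough). Monotonicity of $f_\nn^\dag$ together with $f_\nn^\dag(k/\omega(\nn))=s_k$ then yields $f_\nn^\dag(y)\le u^+<f^\dag(y)+\epsilon$, and a symmetric argument gives $f_\nn^\dag(y)>f^\dag(y)-\epsilon$. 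Letting $\epsilon\to0$ completes the pointwise convergence, and the a.e.\ statement follows because the monotone $f^\dag$ has only countably many discontinuities. I expect the main obstacle to be the Riemann integrability step, which is where the three hypotheses---regularity of $\Omega$, a.e.\ continuity of $f$, and $u$ being a continuity point of $F_f$---must all combine; the rest of the proof is standard quantile/CDF bookkeeping with some care for the $1/\omega(\nn)$ discretization error introduced by the linear interpolation.
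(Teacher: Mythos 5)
Your proof is correct, and it takes a genuinely different transfer step from the paper's. Both arguments start from the same Riemann-sum equidistribution of a.u.\ grids (the paper's Lemmas~\ref{illemma}--\ref{ilcor} and Remark~\ref{chi_Omega}), but then diverge. The paper tests the sorted samples against arbitrary $F\in C_c(\mathbb R)$, obtaining a weak-$*$ type condition $\frac1{\omega(\nn)}\sum_\ell F(s_\ell)\to\int_0^1 F(f^\dag)$, and converts this to pointwise convergence via a bespoke criterion for monotone functions (Lemma~\ref{ultimo}, proved by contradiction with a carefully chosen test function). You instead test against the indicators $\chi_{(-\infty,u]}$ for $u$ a continuity point of $F_f$, obtaining convergence of the empirical CDF $F_\nn(u)\to F_f(u)$, and then run the classical CDF$\to$quantile argument plus the $1/\omega(\nn)$ discretization bookkeeping. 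Your key extra observation is that continuity of $F_f$ at $u$ forces $\mu_d\{f=u\}=0$, which makes the indicator $G_u=\chi_\Omega\chi_{\{f\le u\}}$ Riemann integrable on $[\aa,\bb]$ (its discontinuity set sits inside $\partial\Omega\cup\{f=u\}\cup D_f$, all negligible); the paper avoids this by working with continuous $F$, for which $F\circ f$ is automatically a.e.\ continuous. What your route buys is a more classical and elementary transfer (pure CDF/quantile bookkeeping, density of continuity points of the monotone $F_f$), at the cost of an extra case analysis around $u$; what the paper's route buys is a cleaner, reusable abstraction in Lemma~\ref{ultimo} that keeps the proof of the theorem itself to a few lines. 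Both proofs are sound.
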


Note that the last 
statement in Theorem~\ref{th:main} follows from the observation that $f^\dag$ is monotone and hence the set of its discontinuity points is countable \cite[Theorem~4.30]{Rudinino}. 
For the proof of Theorem~\ref{th:main}, we need three lemmas, which have an interest also in themselves. The first two provide a generalization of \cite[Lemma~3.1]{a.u.grids}. The third one is an improved version of a result never published \cite[Lemma~4.2]{barbarinoDM}.
We say that a function $f:\mathbb R^d\to\mathbb R$ is locally bounded on $\mathbb R^d$ if it is bounded on every compact subset of $\mathbb R^d$.

\begin{lemma}\label{illemma}
Let $f:\mathbb R^d\to\mathbb R$ be measurable and let $[\aa,\bb]$ be a $d$-dimensional rectangle. 
Consider a standard partition $\{I_{\ii,\nn}\}_{\ii=\bu,\ldots,\nn}$ of $[\aa,\bb]$ and 
let $\mathcal G_\nn=\{\xx_{\ii,\nn}\}_{\ii=\bu,\ldots,\nn}$ be a.u.\ in $[\aa,\bb]$. Then,
\begin{equation}\label{lim_c}
\lim_{\nn\to\infty}\sum_{\ii=\bu}^\nn f(\xx_{\ii,\nn})\chi_{I_{\ii,\nn}}(\xx)=f(\xx)
\end{equation}
for every $\xx\in[\aa,\bb]$ which is a continuity point of $f$.
In particular, if $f$ is continuous a.e.\ and locally bounded on $\mathbb R^d$, then
\begin{equation}\label{a.e.c}
\lim_{\nn\to\infty}\sum_{\ii=\bu}^\nn f(\xx_{\ii,\nn})\chi_{I_{\ii,\nn}}(\xx)=f(\xx)\mbox{ \,for a.e.\ $\xx\in[\aa,\bb]$}
\end{equation}
and
\begin{equation}\label{Rsum}
\lim_{\nn\to\infty}\frac1{N(\nn)}\sum_{\ii=\bu}^\nn f(\xx_{\ii,\nn})=\frac1{\mu_d([\aa,\bb])}\int_{[\aa,\bb]}f(\xx){\rm d}\xx.
\end{equation}
\end{lemma}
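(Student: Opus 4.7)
The plan is to prove the three convergence statements in sequence, reducing each to a local computation. The pointwise limit \eqref{lim_c} is the substantive piece; the a.e.\ limit \eqref{a.e.c} is an immediate corollary; and the Riemann-sum limit \eqref{Rsum} follows from \eqref{a.e.c} by bounded convergence once the integrands are shown to be uniformly bounded.

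For \eqref{lim_c}, fix a continuity point $\xx\in[\aa,\bb]$ of $f$. Since $\{I_{\ii,\nn}\}_{\ii=\bu,\ldots,\nn}$ partitions $[\aa,\bb]$, there is a unique index $\ii(\nn)$ with $\xx\in I_{\ii(\nn),\nn}$, so the sum collapses to the single term $f(\xx_{\ii(\nn),\nn})$. The standard-partition inclusion forces $\xx\in[\aa+(\ii(\nn)-\bu)(\bb-\aa)/\nn,\,\aa+\ii(\nn)(\bb-\aa)/\nn]$, whence
\[
\Bigl\|\xx-\Bigl(\aa+\ii(\nn)\frac{\bb-\aa}{\nn}\Bigr)\Bigr\|_\infty\le\max_{j=1,\ldots,d}\frac{b_j-a_j}{n_j}\xrightarrow[\nn\to\infty]{}0,
\]
while the a.u.\ hypothesis gives $\|\xx_{\ii(\nn),\nn}-(\aa+\ii(\nn)(\bb-\aa)/\nn)\|_\infty\to 0$. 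A triangle inequality yields $\xx_{\ii(\nn),\nn}\to\xx$, and continuity of $f$ at $\xx$ produces \eqref{lim_c}. Statement \eqref{a.e.c} is then immediate, since under the hypothesis the set of continuity points of $f$ has full measure in $[\aa,\bb]$.

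For \eqref{Rsum}, I would first observe that each $I_{\ii,\nn}$ has measure $\mu_d([\aa,\bb])/N(\nn)$, so integrating the step function gives
\[
\frac1{\mu_d([\aa,\bb])}\int_{[\aa,\bb]}\sum_{\ii=\bu}^\nn f(\xx_{\ii,\nn})\chi_{I_{\ii,\nn}}(\xx)\,{\rm d}\xx=\frac1{N(\nn)}\sum_{\ii=\bu}^\nn f(\xx_{\ii,\nn}),
\]
and it suffices to interchange limit and integral on the left. The a.u.\ condition implies that for all sufficiently large $\nn$ every grid point $\xx_{\ii,\nn}$ lies in the fixed compact set $K=[\aa-\bu,\bb+\bu]$; local boundedness of $f$ on $K$ then supplies a constant $M$ with $|f(\xx_{\ii,\nn})|\le M$ uniformly in $\ii$ and in large $\nn$. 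Combined with \eqref{a.e.c}, bounded convergence on the finite-measure set $[\aa,\bb]$ delivers \eqref{Rsum}.

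The only real technical point is the passage under the integral in \eqref{Rsum}: the grid points need not lie inside $[\aa,\bb]$, so local boundedness has to be invoked on a slightly enlarged compact set, which the a.u.\ property makes available uniformly for all large $\nn$. Everything else is bookkeeping built directly from the definitions of standard partition and a.u.\ grid.
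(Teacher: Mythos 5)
Your proof is correct and follows essentially the same route as the paper's: identify the cell of the standard partition containing $\xx$, combine its geometry with the a.u.\ property via a triangle inequality to get the sample point to converge to $\xx$, invoke continuity, and then establish \eqref{Rsum} by dominated convergence with the uniform bound supplied by local boundedness on the enlarged compact set $[\aa-\bu,\bb+\bu]$. The only cosmetic difference is that the paper phrases the pointwise step entirely in $\epsilon$--$\delta$ language rather than as a sequential (net) limit, but the underlying argument is identical.
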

\begin{proof}
Let $S=[\aa-\bu,\bb+\bu]$. The grid $\mathcal G_\nn$ is a.u.\ in $[\aa,\bb]$ and hence it is eventually contained in $S$ as $\nn\to\infty$. Without loss of generality, we can assume that $\mathcal G_\nn\subset S$ for all $\nn$.
Let $\xx\in[\aa,\bb]$ be a continuity point of $f$ and fix $\epsilon>0$. Then, there is a $\delta=\delta_{\xx,\epsilon}>0$ such that $|f(\boldsymbol y)-f(\xx)|\le\epsilon$ whenever $\boldsymbol y\in S$ and $\|\boldsymbol y-\xx\|_\infty\le\delta$. Since $\mathcal G_\nn$ is a.u.\ in $[\aa,\bb]$, we can choose $\nn_\delta$ such that, for $\nn\ge\nn_\delta$,
\[ \max_{\ii=\bu,\ldots,\nn}\left\|\xx_{\ii,\nn}-\Bigl(\aa+\ii\frac{\bb-\aa}\nn\Bigr)\right\|_\infty\le\frac\delta2,\qquad\biggl\|\frac{\bb-\aa}{\nn}\biggr\|_\infty\le\frac\delta2. \]
For $\nn\ge\nn_\delta$, if we call $I_{\kk,\nn}$ the unique element of the standard partition containing $\xx$, we have
\[ \|\xx_{\kk,\nn}-\xx\|_\infty\le\left\|\xx_{\kk,\nn}-\Bigl(\aa+\kk\frac{\bb-\aa}\nn\Bigr)\right\|_\infty+\left\|\Bigl(\aa+\kk\frac{\bb-\aa}\nn\Bigr)-\xx\right\|_\infty\le\frac\delta2+\frac\delta2=\delta \]
and
\[ \left|\sum_{\ii=\bu}^\nn f(\xx_{\ii,\nn})\chi_{I_{\ii,\nn}}(\xx)-f(\xx)\right|=|f(\xx_{\kk,\nn})-f(\xx)|\le\epsilon. \]
As a consequence, $\sum_{\ii=\bu}^\nn f(\xx_{\ii,\nn})\chi_{I_{\ii,\nn}}(\xx)\to f(\xx)$ whenever $\xx\in[\aa,\bb]$ is a continuity point of $f$. This proves \eqref{lim_c}.
In the case where $f$ is continuous a.e.\ and locally bounded on $\mathbb R^d$, the limit \eqref{a.e.c} follows immediately from \eqref{lim_c}, 
while the limit \eqref{Rsum} follows from \eqref{a.e.c} and the dominated convergence theorem, taking into account that
\[ \left|\sum_{\ii=\bu}^\nn f(\xx_{\ii,\nn})\chi_{I_{\ii,\nn}}\right|\le\|f\|_{\infty,S}<\infty,\qquad\int_{[\aa,\bb]}\Biggl(\sum_{\ii=\bu}^\nn f(\xx_{\ii,\nn})\chi_{I_{\ii,\nn}}\Biggr)=\frac{N(\bb-\aa)}{N(\nn)}\sum_{\ii=\bu}^\nn f(\xx_{\ii,\nn}), \]
and $N(\bb-\aa)=\mu_d([\aa,\bb])$. 
\end{proof}

\begin{remark}\label{chi_Omega}
Let $\Omega\subset\mathbb R^d$ be a regular set contained in the $d$-dimensional rectangle $[\aa,\bb]$, let $\mathcal G_\nn=\{\xx_{\ii,\nn}\}_{\ii=\bu,\ldots,\nn}$ be a.u.\ in $[\aa,\bb]$, and let $\II_\nn(\Omega)=\{\ii\in\{\bu,\ldots,\nn\}:\xx_{\ii,\nn}\in\Omega\}$. By applying the limit \eqref{Rsum} in Lemma~\ref{illemma} with $f=\chi_\Omega$, we obtain
\[ \lim_{\nn\to\infty}\frac{\#\II_\nn(\Omega)}{N(\nn)}=\frac{\mu_d(\Omega)}{\mu_d([\aa,\bb])} .\]
\end{remark}

\begin{lemma}\label{ilcor}
Let $f:\Omega\subset\mathbb R^d\to\mathbb R$ be measurable on the bounded set $\Omega$ and 
let $[\aa,\bb]$ be a $d$-dimensional rectangle containing $\Omega$. Consider a standard partition $\{I_{\ii,\nn}\}_{\ii=\bu,\ldots,\nn}$ of $[\aa,\bb]$, 
let $\mathcal G_\nn=\{\xx_{\ii,\nn}\}_{\ii=\bu,\ldots,\nn}$ be a.u.\ in $[\aa,\bb]$, and set $\II_\nn(\Omega)=\{\ii\in\{\bu,\ldots,\nn\}:\xx_{\ii,\nn}\in\Omega\}$. Then,
\begin{equation}\label{lim_c.O}
\lim_{\nn\to\infty}\sum_{\ii\in\II_\nn(\Omega)}f(\xx_{\ii,\nn})\chi_{I_{\ii,\nn}}(\xx)=f(\xx)
\end{equation}
for every $\xx\in\accentset{\circ}\Omega$ which is a continuity point of $f$.
In particular, if $\Omega$ is a regular set with $\mu_d(\Omega)>0$ and $f$ is continuous a.e.\ and bounded on $\Omega$,
then
\begin{equation}\label{a.e.c.O}
\lim_{\nn\to\infty}\sum_{\ii\in\II_\nn(\Omega)}f(\xx_{\ii,\nn})\chi_{I_{\ii,\nn}}(\xx)=f(\xx)\mbox{ \,for a.e.\ $\xx\in\Omega$}
\end{equation}
and
\begin{equation}\label{Rsum.O}
\lim_{\nn\to\infty}\frac1{\#\II_\nn(\Omega)}\sum_{\ii\in\II_\nn(\Omega)}f(\xx_{\ii,\nn})=\frac1{\mu_d(\Omega)}\int_{\Omega}f(\xx){\rm d}\xx.
\end{equation}
\end{lemma}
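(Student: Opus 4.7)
The strategy is to reduce everything to Lemma~\ref{illemma} by extending $f$ to the enclosing rectangle. The pointwise statement \eqref{lim_c.O}, however, is cleanest to prove directly without any extension.

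For \eqref{lim_c.O}, fix $\xx\in\accentset{\circ}\Omega$ that is a continuity point of $f$, and choose $\rho>0$ with $D(\xx,\rho)\subseteq\Omega$. For each $\nn$, let $\kk=\kk(\nn)$ be the unique $d$-index with $\xx\in I_{\kk,\nn}$ (the standard partition covers $[\aa,\bb]$, which contains $\Omega\ni\xx$). Combining the a.u.\ property of $\mathcal G_\nn$ with the fact that $I_{\kk,\nn}$ has diameter $O(\|(\bb-\aa)/\nn\|_\infty)$, exactly as in the proof of Lemma~\ref{illemma}, one gets $\|\xx_{\kk,\nn}-\xx\|_\infty\to 0$. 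Therefore, for all sufficiently large $\nn$, $\xx_{\kk,\nn}\in D(\xx,\rho)\subseteq\Omega$, so $\kk\in\II_\nn(\Omega)$ and
\[ \sum_{\ii\in\II_\nn(\Omega)}f(\xx_{\ii,\nn})\chi_{I_{\ii,\nn}}(\xx)=f(\xx_{\kk,\nn})\xrightarrow[\nn\to\infty]{}f(\xx) \]
by continuity of $f$ at $\xx$.

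The a.e.\ statement \eqref{a.e.c.O} is an immediate consequence of \eqref{lim_c.O}: if $\Omega$ is regular then $\mu_d(\Omega\setminus\accentset{\circ}\Omega)\le\mu_d(\partial\Omega)=0$, and the set of discontinuity points of $f$ is null by assumption, so \eqref{lim_c.O} holds for a.e.\ $\xx\in\Omega$.

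For \eqref{Rsum.O}, extend $f$ to the whole rectangle by setting $\tilde f=f$ on $\Omega$ and $\tilde f=0$ on $[\aa,\bb]\setminus\Omega$ (and arbitrarily, e.g.\ $0$, outside $[\aa,\bb]$). Since $f$ is bounded on $\Omega$, $\tilde f$ is locally bounded on $\mathbb R^d$; since $f$ is continuous a.e.\ on $\Omega$ and $\chi_\Omega$ is continuous a.e.\ on $\mathbb R^d$ (because $\mu_d(\partial\Omega)=0$), $\tilde f$ is continuous a.e.\ on $\mathbb R^d$ (the only possible new discontinuities lie in $\partial\Omega$). Applying \eqref{Rsum} from Lemma~\ref{illemma} to $\tilde f$, together with the observation $\sum_{\ii=\bu}^\nn\tilde f(\xx_{\ii,\nn})=\sum_{\ii\in\II_\nn(\Omega)}f(\xx_{\ii,\nn})$ and $\int_{[\aa,\bb]}\tilde f=\int_\Omega f$, gives
\[ \frac{1}{N(\nn)}\sum_{\ii\in\II_\nn(\Omega)}f(\xx_{\ii,\nn})\longrightarrow\frac{1}{\mu_d([\aa,\bb])}\int_\Omega f(\xx)\,{\rm d}\xx. \]
Dividing by $\#\II_\nn(\Omega)/N(\nn)$ and using Remark~\ref{chi_Omega} (which yields $\#\II_\nn(\Omega)/N(\nn)\to\mu_d(\Omega)/\mu_d([\aa,\bb])>0$, so the denominators are eventually bounded away from $0$) produces \eqref{Rsum.O}. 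The only genuinely delicate step is the first one, namely forcing the index $\kk$ of the partition cell containing $\xx$ to lie in $\II_\nn(\Omega)$; this is where the hypothesis $\xx\in\accentset{\circ}\Omega$ (rather than merely $\xx\in\Omega$) is essential, and everything else is bookkeeping built on Lemma~\ref{illemma}.
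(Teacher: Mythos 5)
Your proof is correct, and it is built on the same key idea as the paper's: reduce to Lemma~\ref{illemma} by extending $f$ to the rectangle. The paper does this more uniformly — it sets $\tilde f=f\chi_\Omega$ once, notes that $\sum_{\ii=\bu}^\nn\tilde f(\xx_{\ii,\nn})\chi_{I_{\ii,\nn}}=\sum_{\ii\in\II_\nn(\Omega)}f(\xx_{\ii,\nn})\chi_{I_{\ii,\nn}}$, obtains \eqref{lim_c.O} from \eqref{lim_c} applied to $\tilde f$ (using that a continuity point of $f$ in $\accentset{\circ}\Omega$ is a continuity point of $\tilde f$), and then derives \eqref{Rsum.O} from \eqref{a.e.c.O} via the dominated convergence theorem on $\Omega$. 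You instead prove \eqref{lim_c.O} directly by re-running the $\delta$-argument of Lemma~\ref{illemma} inside $\Omega$, where the observation that $\xx\in\accentset{\circ}\Omega$ forces $\xx_{\kk(\nn),\nn}$ into $\Omega$ eventually is exactly the right point, and you get \eqref{Rsum.O} by applying \eqref{Rsum} to $\tilde f$ on $[\aa,\bb]$ and dividing by $\#\II_\nn(\Omega)/N(\nn)\to\mu_d(\Omega)/\mu_d([\aa,\bb])>0$ (Remark~\ref{chi_Omega}). Your route to \eqref{Rsum.O} is arguably a bit cleaner: working over $[\aa,\bb]$ gives $\int_{[\aa,\bb]}\chi_{I_{\ii,\nn}}=N(\bb-\aa)/N(\nn)$ exactly, whereas integrating the step function over $\Omega$ requires one to account for the cells $I_{\ii,\nn}$ with $\ii\in\II_\nn(\Omega)$ that straddle $\partial\Omega$. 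In substance, though, this is the same proof with slightly different bookkeeping.
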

\begin{proof}
Let $\tilde f:\mathbb R^d\to\mathbb R$ be the extension of $f$ to $0$ outside $\Omega$.
By Lemma~\ref{illemma},
\[ \lim_{\nn\to\infty}\sum_{\ii=\bu}^\nn\tilde f(\xx_{\ii,\nn})\chi_{I_{\ii,\nn}}(\xx)=\tilde f(\xx) \]
for every $\xx\in[\aa,\bb]$ which is a continuity point of $\tilde f$. This implies that
\[ \lim_{\nn\to\infty}\sum_{\ii\in\II_\nn(\Omega)}f(\xx_{\ii,\nn})\chi_{I_{\ii,\nn}}(\xx)=f(\xx) \]
for every $\xx\in\accentset{\circ}\Omega$ which is a continuity point of $f$, and \eqref{lim_c.O} is proved. 
In the case where $\Omega$ is a regular set with $\mu_d(\Omega)>0$ and $f$ is continuous a.e.\ and bounded on $\Omega$,
the limit \eqref{a.e.c.O} follows from \eqref{lim_c.O} and the equation $\mu_d(\partial\Omega)=0$, while
the limit \eqref{Rsum.O} follows from \eqref{a.e.c.O} and the dominated convergence theorem,
taking into account that
\begin{align*}
&\left|\sum_{\ii\in\II_\nn(\Omega)}f(\xx_{\ii,\nn})\chi_{I_{\ii,\nn}}\right|\le\|f\|_{\infty,\Omega}<\infty,\\
&\int_\Omega\Biggl(\sum_{\ii\in\II_\nn(\Omega)}f(\xx_{\ii,\nn})\chi_{I_{\ii,\nn}}\Biggr)=\frac{N(\bb-\aa)}{N(\nn)}\sum_{\ii\in\II_\nn(\Omega)}f(\xx_{\ii,\nn})=\frac{\mu_d([\aa,\bb])\cdot\#\II_\nn(\Omega)}{N(\nn)}\cdot\frac1{\#\II_\nn(\Omega)}\sum_{\ii\in\II_\nn(\Omega)}f(\xx_{\ii,\nn}),\\
&\lim_{\nn\to\infty}\frac{\#\II_\nn(\Omega)}{N(\nn)}=\frac{\mu_d(\Omega)}{\mu_d([\aa,\bb])},
\end{align*}
where the latter is a consequence of Remark~\ref{chi_Omega}.
\end{proof}

\begin{lemma}\label{ultimo}
Let $\omega_m$ be a sequence of positive integers such that $\omega_m\to\infty$ and let $g_m:[0,1]\to\mathbb R$ be a sequence of non-decreasing functions such that
\[ \lim_{m\to\infty}\frac1{\omega_m}\sum_{\ell=0}^{\omega_m}F\Bigl(g_m\Bigl(\frac\ell{\omega_m}\Bigr)\Bigr)=\int_0^1F(g(y)){\rm d}y,\qquad\forall\,F\in C_c(\mathbb R), \]
where $g:(0,1)\to\mathbb R$ is non-decreasing. Then, $g_m(y)\to g(y)$ for every continuity point $y$ of $g$. 
\end{lemma}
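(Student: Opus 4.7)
The idea is to recognize the hypothesis as weak convergence of empirical probability measures. Set $\tilde\mu_m := \frac{1}{\omega_m+1}\sum_{\ell=0}^{\omega_m}\delta_{g_m(\ell/\omega_m)}$, a probability measure on $\mathbb R$, and $\mu := g_*(\mu_1|_{(0,1)})$, the pushforward of Lebesgue measure on $(0,1)$ under $g$, which is also a probability measure on $\mathbb R$. Since $(\omega_m+1)/\omega_m \to 1$, the hypothesis reads $\int_{\mathbb R} F\,d\tilde\mu_m \to \int_{\mathbb R} F\,d\mu$ for every $F \in C_c(\mathbb R)$.

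I would first establish tightness of $\{\tilde\mu_m\}$: given $\eta > 0$, since $\mu$ is a probability measure on $\mathbb R$ I pick $R > 0$ and $F_R \in C_c(\mathbb R)$ with $\chi_{[-R,R]} \le F_R \le \chi_{[-R-1,R+1]}$ and $\int F_R\,d\mu \ge 1-\eta$; the $C_c$-convergence then gives $\liminf_m \tilde\mu_m([-R-1,R+1]) \ge 1-\eta$. Tightness combined with $C_c$-convergence yields full weak convergence of $\tilde\mu_m$ to $\mu$, so the distribution functions $\tilde F_m(t) := \tilde\mu_m((-\infty,t])$ converge to $F(t) := \mu_1\{g \le t\}$ at every continuity point of $F$.

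Next, I translate this into pointwise convergence of $g_m$ at a continuity point $y_0 \in (0,1)$ of $g$. Given $\epsilon > 0$, continuity of $g$ at $y_0$ yields $\delta \in (0,\min(y_0,1-y_0))$ with $|g(y_0 \pm \delta) - g(y_0)| < \epsilon/2$, so the intervals $(g(y_0)-\epsilon, g(y_0-\delta))$ and $(g(y_0+\delta), g(y_0)+\epsilon)$ are both nonempty. Since the monotone function $F$ has only countably many discontinuities, I pick continuity points $t_-, t_+$ of $F$ inside these two intervals. Monotonicity of $g$ then gives $\{g \le t_-\} \subseteq (0, y_0-\delta]$ and $\{g \le t_+\} \supseteq (0, y_0+\delta]$, so $F(t_-) \le y_0-\delta$ and $F(t_+) \ge y_0+\delta$; by the first step, $\tilde F_m(t_-) < y_0-\delta/2$ and $\tilde F_m(t_+) > y_0+\delta/2$ for all large $m$.

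Finally, I use monotonicity of $g_m$ to cash these CDF inequalities in as pointwise ones: the set $\{\ell : g_m(\ell/\omega_m) \le t\}$ is always a prefix $\{0,\dots,k_m(t)\}$ of indices, and $(k_m(t_+)+1)/(\omega_m+1) = \tilde F_m(t_+) > y_0+\delta/2$ gives $k_m(t_+)/\omega_m > y_0$ for $m$ large (the extra $\delta/2$ absorbs rounding), so $g_m(y_0) \le g_m(k_m(t_+)/\omega_m) \le t_+ < g(y_0)+\epsilon$; symmetrically, $\tilde F_m(t_-) < y_0-\delta/2$ forces some index $\ell^-/\omega_m < y_0$ with $g_m(\ell^-/\omega_m) > t_-$, whence $g_m(y_0) \ge g_m(\ell^-/\omega_m) > t_- > g(y_0)-\epsilon$. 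I expect the tightness step to be the main delicate point, since promoting $C_c$-convergence into convergence of distribution functions requires that no mass escapes to infinity in the limit; this is guaranteed here because $g$ is finite-valued, so $\mu$ is a genuine probability measure on $\mathbb R$.
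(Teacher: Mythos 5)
Your proof is correct, but it takes a genuinely different route from the paper's. The paper argues directly by contradiction: assuming $g_m(\alpha)-g(\alpha)>\epsilon$ along a subsequence, it picks a single carefully designed bump $F\in C_c(\mathbb R)$ that is $1$ on $[g(\gamma),g(\alpha+\delta)]$ and $0$ above $g(\alpha+\delta)+\epsilon/2$, and compares the Riemann-sum side (bounded above by $\alpha+1/\omega_m$ because of the monotonicity of $g_m$) with the integral side (bounded below by $\alpha+\delta-\gamma$); the symmetric case is reduced to this one via $h(x)=-g(1-x)$. You instead package the hypothesis as vague convergence of the empirical measures $\tilde\mu_m$ to $\mu=g_*(\mu_1|_{(0,1)})$, upgrade it to weak convergence by proving tightness (using that $\mu$ is a probability measure since $g$ is finite-valued), pass to convergence of CDFs at continuity points, and then translate back to pointwise convergence of $g_m$ using that $\{\ell:g_m(\ell/\omega_m)\le t\}$ is a prefix. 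The paper's argument is more elementary and self-contained (one clever test function, no invocation of portmanteau/Helly-type facts), while yours is more structural and handles the two-sided bound symmetrically at once; it also isolates the conceptual content (the hypothesis really is weak convergence of quantile empirical measures) at the cost of importing standard probability machinery. One small point to make explicit when writing it up: the tightness step, which you already flag, is exactly what promotes $C_c$-testing to $C_b$-testing, and it is the one place where the assumption that $g$ takes real (not extended-real) values is essential; the paper's bump-function argument sidesteps this by never needing full weak convergence.
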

\begin{proof}
Suppose by contradiction that there exists $\alpha\in(0,1)$ such that $\alpha$ is a continuity point of $g$ and $g_m(\alpha)\not\to g$. We then have $|g_m(\alpha)-g(\alpha)|>\epsilon$ infinitely often (i.o.)\ for some fixed $\epsilon>0$. There are two possible (non-mutually exclusive) cases.

\medskip

\noindent{\em Case 1: There exist infinite indices $m$ such that $g_m(\alpha)-g(\alpha)>\epsilon$.} Passing to a subsequence (if necessary), we can assume that
\begin{equation}\label{a}
g_m(\alpha)>g(\alpha)+\epsilon
\end{equation}
for all $m$. Since $\alpha$ is a continuity point of $g$, there exists $\delta>0$ such that
\begin{equation}\label{b}
g(\alpha+\delta)<g(\alpha)+\frac\epsilon2;
\end{equation}
see Figure~\ref{ggm}. Let $0<\gamma<\delta$ and take $F\in C_c(\mathbb R)$ such that $F=1$ on $[g(\gamma),g(\alpha+\delta)]$, $F=0$ on $[g(\alpha+\delta)+\frac\epsilon2,\infty)$ and $0\le F\le1$ on $\mathbb R$. Since $g(y)\in[g(\gamma),g(\alpha+\delta)]$ for $y\in[\gamma,\alpha+\delta]$ by the monotonicity of $g$, we have
\begin{align*}
\int_0^1F(g(y)){\rm d}y\ge\alpha+\delta-\gamma.
\end{align*}
Since $g_m(y)\ge g_m(\alpha)>g(\alpha)+\epsilon>g(\alpha+\delta)+\frac{\epsilon}2$ for $y\ge\alpha$ by \eqref{a}--\eqref{b} and the monotonicity of $g_m$, we have
\begin{align*}
\frac1{\omega_m}\sum_{\ell=0}^{\omega_m}F\Bigl(g_m\Bigl(\frac\ell{\omega_m}\Bigr)\Bigr)\le\frac{\#\{\ell\in\{0,\ldots,\omega_m\}:\frac{\ell}{\omega_m}\in[0,\alpha)\}}{\omega_m}\le\alpha+\frac1{\omega_m}.
\end{align*}
Using the hypothesis, we finally obtain
\[ \alpha\ge\lim_{m\to\infty}\frac1{\omega_m}\sum_{\ell=0}^{\omega_m}F\Bigl(g_m\Bigl(\frac\ell{\omega_m}\Bigr)\Bigr)=\int_0^1F(g(y)){\rm d}y\ge\alpha+\delta-\gamma, \]
which is a contradiction as $\gamma<\delta$.

\begin{figure}
\centering
\includegraphics[width=0.40\textwidth]{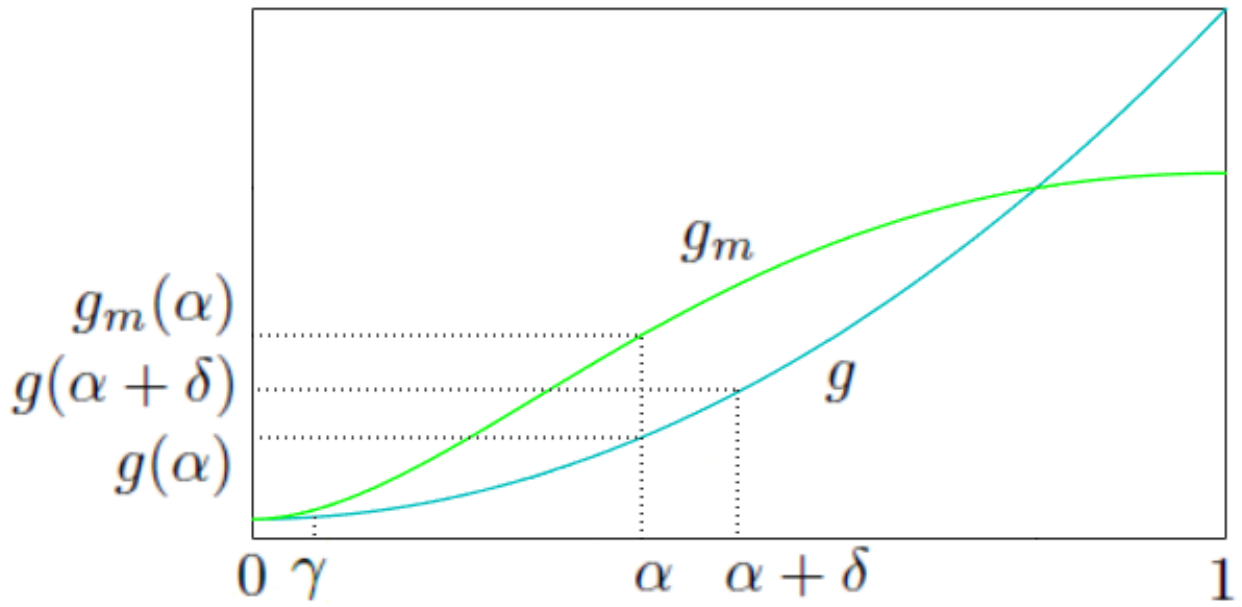}
\caption{Illustration for the proof of Lemma~\ref{ultimo}.}
\label{ggm}
\end{figure}

\medskip

\noindent{\em Case 2: There exist infinite indices $m$ such that $g_m(\alpha)-g(\alpha)<-\epsilon$.} 
Let $h(x)=-g(1-x)$ and $h_m=-g_m(1-x)$. These functions are non-decreasing and, by assumption,
\[\lim_{m\to\infty}\frac1{\omega_m}\sum_{\ell=0}^{\omega_m}F\Bigl(h_m\Bigl(\frac\ell{\omega_m}\Bigr)\Bigr)=
\lim_{m\to\infty}\frac1{\omega_m}\sum_{\ell=0}^{\omega_m}F\Bigl(-g_m\Bigl(\frac\ell{\omega_m}\Bigr)\Bigr)=\int_0^1F(-g(y)){\rm d}y
=\int_0^1F(h(y)){\rm d}y,\qquad\forall\,F\in C_c(\mathbb R).
\]
Moreover, the condition $g_m(\alpha)-g(\alpha)<-\epsilon$ can be rewritten as $h_m(1-\alpha)-h(1-\alpha)>\epsilon$, where $1-\alpha$ is a continuity point of $h$. We can therefore use the same argument as in Case~1 
and get a contradiction.
\end{proof}


\begin{proof}[Proof of Theorem~{\rm\ref{th:main}}]
For every $F\in C_c(\mathbb R)$, we have
\begin{align*}
\lim_{\nn\to\infty}\frac1{\omega(\nn)}\sum_{\ell=0}^{\omega(\nn)}F\Bigl(f^\dag_\nn\Bigl(\frac\ell{\omega(\nn)}\Bigr)\Bigr)&=\lim_{\nn\to\infty}\frac1{\omega(\nn)}\sum_{\ell=0}^{\omega(\nn)}F(s_\ell)=\lim_{\nn\to\infty}\frac1{\#\II_\nn(\Omega)-1}\sum_{\ii\in\II_\nn(\Omega)}F(f(\xx_{\ii,\nn}))\\
&=\frac1{\mu_d(\Omega)}\int_\Omega F(f(\xx)){\rm d}\xx=\int_0^1F(f^\dag(y)){\rm d}y,
\end{align*}
where the second-to-last equality follows from Lemma~\ref{ilcor} applied to the composite function $F(f)$---which is continuous a.e.\ and bounded on the regular set $\Omega$---while the last equality follows from Lemma~\ref{lemma_completo}. 
Since $f^\dag_\nn:[0,1]\to\mathbb R$ and $f^\dag:(0,1)\to\mathbb R$ are non-decreasing functions, and
\[ \omega(\nn)=\#\II_\nn(\Omega)-1\sim\frac{\mu_d(\Omega)N(\nn)}{\mu_d([\aa,\bb])}\to\infty\mbox{ \,as }\,\nn\to\infty \]
by Remark~\ref{chi_Omega}, we conclude by Lemma~\ref{ultimo} that $f^\dag_\nn(y)\to f^\dag(y)$ for every continuity point $y$ of $f^\dag$.
\end{proof}

We remark that Theorem~\ref{th:main} is clearly constructive. In particular, it can be easily converted to an algorithm that produces a sequence of linear spline functions $f^\dag_\nn$ converging to the monotone rearrangement $f^\dag$ of an a.e.\ continuous function $f$ defined on a regular set $\Omega$. Different versions of this algorithm, depending on the considered a.u.\ grid $\mathcal G_\nn$, have been largely used within the theory of GLT sequences \cite{graphsI,barbarinoDM,GLTbookIII,FE-DG,bianchi,bianchi0,MariaLucia,ALIF,axioms,GLTbookI,Tom-paper}. However, all these works have never provided a mathematical formalization of their construction, which is in fact the original contribution of this paper.

\bigskip

\noindent\textbf{Uniform convergence.} In Theorem~\ref{th:main'} we show that, under additional assumptions on $f$ and $\Omega$, the convergence $f^\dag_\nn\to f^\dag$ in Theorem~\ref{th:main} is uniform. The same result can also be derived from the main theorems in \cite{quantile3}, with the only difference that the authors of \cite{quantile3} adopt a probabilistic perspective, while here we propose a pure analytical approach. In Theorem~\ref{th:main'}, it is implicitly assumed that, if $f$ is bounded from below [above], then $f^\dag$ is defined also in $y=0$ [$y=1$] in the obvious way $f^\dag(0)=\lim_{y\to0}f^\dag(y)$ [$f^\dag(1)=\lim_{y\to1}f^\dag(y)$]; see also Lemma~\ref{lemma_completo}.

\begin{theorem}\label{th:main'}
In Theorem~{\rm\ref{th:main}}, suppose that $\Omega$ is connected and contained in the closure of its interior. Then, the following properties hold.
\begin{enumerate}[nolistsep,leftmargin=*]
	\item If $f$ is continuous on $\Omega$, 
	then $f_\nn^\dag\to f^\dag$ 
	uniformly on every compact interval $[\alpha,\beta]\subset(0,1)$.
	\item If $f$ is continuous and bounded from below on $\Omega$, then 
	$f_\nn^\dag\to f^\dag$ uniformly on every compact interval $[0,\alpha]\subset[0,1)$.
	\item If $f$ is continuous and bounded from above on $\Omega$, 
	then $f_\nn^\dag\to f^\dag$ uniformly on every compact interval $[\alpha,1]\subset(0,1]$.
	\item If $f$ is continuous and bounded on $\Omega$, 
	then $f_\nn^\dag\to f^\dag$ uniformly on $[0,1]$.
\end{enumerate}
\end{theorem}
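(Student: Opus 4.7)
The strategy is to combine the pointwise convergence from Theorem~\ref{th:main} with the continuity of $f^\dag$ from Lemma~\ref{g_cont} via a P\'olya-type uniform convergence theorem for monotone sequences, and then to handle the endpoints separately for parts 2--4. Under the hypotheses, Lemma~\ref{g_cont} gives $f^\dag\in C((0,1))$, and Lemma~\ref{lemma_completo} ensures that $f^\dag$ extends continuously to $0$ (resp.\ $1$) whenever $f$ is bounded from below (resp.\ above); Theorem~\ref{th:main} then yields $f_\nn^\dag(y)\to f^\dag(y)$ for every $y\in(0,1)$.

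The workhorse is the following classical observation, which I would either cite or prove in one line: if $g_m:J\to\mathbb R$ are non-decreasing on a compact interval $J$ and converge pointwise to a non-decreasing $g$ that is continuous on $J$, then $g_m\to g$ uniformly on $J$. The proof is a three-$\epsilon$ sandwich: uniform continuity of $g$ yields a finite partition $a=y_0<\cdots<y_N=b$ of $J$ with $g(y_{k+1})-g(y_k)<\epsilon$; pointwise convergence at the finitely many $y_k$'s gives $|g_m(y_k)-g(y_k)|<\epsilon$ eventually; and for $y\in[y_k,y_{k+1}]$ the monotonicity bound $g_m(y_k)\le g_m(y)\le g_m(y_{k+1})$ combined with $g(y_k)\le g(y)\le g(y_{k+1})$ forces $|g_m(y)-g(y)|\le 3\epsilon$. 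Applying this to $f_\nn^\dag$ and $f^\dag$ on $[\alpha,\beta]\subset(0,1)$ proves part~1 immediately.

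For parts~2--4 the only additional issue is pointwise convergence at the endpoint $y=0$ (when $f$ is bounded from below) or $y=1$ (symmetric). Treating $y=0$: I would first identify $f^\dag(0)=\inf_\Omega f$ by combining the equimeasurability of Lemma~\ref{lemma_completo} (which gives $\mu_1\{f^\dag<\inf_\Omega f\}=0$ and $\mu_1\{f^\dag<\inf_\Omega f+\epsilon\}>0$) with Lemma~\ref{int_clos} applied to the non-empty set $\{f<\inf_\Omega f+\epsilon\}$, which is open in $\Omega$ and therefore of positive measure. Then I would show $f_\nn^\dag(0)=\min_{\ii\in\II_\nn(\Omega)}f(\xx_{\ii,\nn})\to\inf_\Omega f$: the inequality $f_\nn^\dag(0)\ge\inf_\Omega f$ is trivial, and for the reverse, given $\epsilon>0$, choose $\xx_0\in\Omega$ with $f(\xx_0)<\inf_\Omega f+\epsilon/3$; since $\Omega\subseteq\overline{\accentset{\circ}\Omega}$ and $f$ is continuous, one can perturb $\xx_0$ slightly to land in $\accentset{\circ}\Omega$ and then find an open Euclidean disk $D\subseteq\accentset{\circ}\Omega$ on which $f<\inf_\Omega f+\epsilon$. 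The a.u.\ property of $\mathcal G_\nn$ forces the grid spacing $\|(\bb-\aa)/\nn\|_\infty$ and the deviation to go to $0$, so $D$ contains a point $\xx_{\ii,\nn}\in\II_\nn(\Omega)$ for all large $\nn$, whence $f_\nn^\dag(0)<\inf_\Omega f+\epsilon$. The case $y=1$ is handled symmetrically (e.g.\ by passing to $-f$, as in Lemma~\ref{ultimo}). Appending these endpoint limits to the pointwise convergence on $(0,1)$ and invoking the P\'olya statement on $[0,\alpha]$, $[\alpha,1]$, or $[0,1]$ gives parts~2, 3, and 4.

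The main obstacle is the endpoint step: the construction of $f_\nn^\dag$ assigns the smallest and largest sample values to $y=0$ and $y=1$, so these boundary values are determined by the extreme grid samples of $f$ rather than by an averaging procedure, and their convergence to $\inf_\Omega f$ and $\sup_\Omega f$ is not contained in Theorem~\ref{th:main}; it must be extracted from the density of a.u.\ grids inside open disks of $\accentset{\circ}\Omega$, which is where the hypothesis $\Omega\subseteq\overline{\accentset{\circ}\Omega}$ is essential. Everything else is a direct application of the P\'olya sandwich together with Lemmas~\ref{lemma_completo}, \ref{int_clos}, and \ref{g_cont}.
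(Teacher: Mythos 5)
Your proposal is correct and proves all four parts, but for the endpoint step you take a genuinely different route from the paper. Both you and the paper prove Part~1 the same way (Lemma~\ref{g_cont} for continuity, Theorem~\ref{th:main} for pointwise convergence, then Dini's second theorem, which the paper quotes as Lemma~\ref{Dini} and you reprove via the three-$\epsilon$ sandwich), and both identify $f^\dag(0)=\inf_\Omega f$ by combining Lemma~\ref{lemma_completo} with Lemma~\ref{int_clos}. The divergence is in showing $f_\nn^\dag(0)\to f^\dag(0)$. You argue directly from the geometry of a.u.\ grids: find a Euclidean disk $D\subseteq\accentset{\circ}\Omega$ on which $f<\inf_\Omega f+\epsilon$ (exploiting $\Omega\subseteq\overline{\accentset{\circ}\Omega}$ and continuity of $f$), and observe that the a.u.\ grid eventually places a sample point inside $D$, forcing $s_0=f_\nn^\dag(0)<\inf_\Omega f+\epsilon$. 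The paper instead uses a purely order-theoretic squeeze: the trivial lower bound $f_\nn^\dag(0)\ge\inf_\Omega f=f^\dag(0)$, the monotonicity bound $f_\nn^\dag(0)\le f_\nn^\dag(\epsilon)$, the pointwise limit $f_\nn^\dag(\epsilon)\to f^\dag(\epsilon)$ from Theorem~\ref{th:main}, and finally $f^\dag(\epsilon)\to f^\dag(0)$ as $\epsilon\to0^+$ by continuity of $f^\dag$ at $0$; this sandwiches $\lim_\nn f_\nn^\dag(0)=f^\dag(0)$ without ever re-examining the grid. Your route buys a more hands-on geometric picture of what the smallest sample is doing, at the cost of re-invoking the a.u.\ property and the hypothesis $\Omega\subseteq\overline{\accentset{\circ}\Omega}$ a second time; the paper's route is shorter and derives the endpoint limit entirely from what is already established (interior pointwise convergence plus monotonicity). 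Both are valid, and your conclusion and the overall structure (reduce Parts~2--4 to the endpoint limit, then apply Dini on the closed interval) match the paper's.
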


The proof of Theorem~\ref{th:main'} relies on the following lemma, which is sometimes referred to as Dini's second theorem \cite[pp.~81 and 270, Problem~127]{Polya-Szego}.

\begin{lemma}\label{Dini}
If a sequence of monotone functions converges pointwise on a compact interval to a continuous function, then it converges uniformly.
\end{lemma}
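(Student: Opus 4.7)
\textbf{Proof proposal for Lemma~\ref{Dini}.} The plan is a classical three-epsilon argument: uniform continuity of the limit on the compact interval provides finitely many checkpoints, pointwise convergence at those checkpoints forces each $f_n$ to be close to $f$ there, and the monotonicity of $f_n$ propagates the control from the checkpoints to every point of the interval.

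First I would reduce to the case in which every $f_n$ is non-decreasing. Splitting the sequence into the subsequence of non-decreasing terms and the subsequence of non-increasing terms, if one of them is finite it can be discarded without affecting uniform convergence of the tail; if both are infinite, then $f$ is simultaneously non-decreasing and non-increasing, hence constant, and it suffices to prove uniform convergence separately on each subsequence. In every case it is enough to handle non-decreasing $f_n$, and then the pointwise limit $f$ is non-decreasing as well.

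Fix $\epsilon>0$ and write the compact interval as $[a,b]$. Since $f$ is continuous on $[a,b]$ it is uniformly continuous, so I can choose a partition $a=y_0<y_1<\cdots<y_k=b$ with $f(y_{j+1})-f(y_j)<\epsilon$ for every $j=0,\ldots,k-1$. Pointwise convergence at the finitely many points $y_j$ supplies an index $N$ such that $|f_n(y_j)-f(y_j)|<\epsilon$ for every $j$ and every $n\ge N$.

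For any $x\in[a,b]$ and any $n\ge N$, pick $j$ with $y_j\le x\le y_{j+1}$. Monotonicity of $f_n$ and $f$ gives
\[ f(y_j)-\epsilon \;\le\; f_n(y_j) \;\le\; f_n(x) \;\le\; f_n(y_{j+1}) \;\le\; f(y_{j+1})+\epsilon, \qquad f(y_j)\le f(x)\le f(y_{j+1}), \]
and combining these two sandwiches with the estimate $f(y_{j+1})-f(y_j)<\epsilon$ yields $|f_n(x)-f(x)|<2\epsilon$ uniformly in $x$, so $f_n\to f$ uniformly on $[a,b]$. The only mildly delicate point is the book-keeping in the reduction to a single direction of monotonicity; the core estimate itself is routine.
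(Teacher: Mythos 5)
Your argument is correct. Note that the paper does not actually prove Lemma~\ref{Dini}: it only cites it as Dini's second theorem from P\'olya--Szeg\H{o} \cite[pp.~81 and 270, Problem~127]{Polya-Szego}, so your write-up supplies the missing (standard) proof rather than an alternative to one given in the text. The core of your argument is the classical checkpoint estimate and it is sound: uniform continuity of the continuous limit gives a partition $a=y_0<\cdots<y_k=b$ with oscillation less than $\epsilon$ on each subinterval, pointwise convergence at the finitely many $y_j$ gives a single $N$, and monotonicity of $f_n$ (together with the monotonicity of $f$ inherited as a pointwise limit) sandwiches $f_n(x)$ between $f(y_j)-\epsilon$ and $f(y_{j+1})+\epsilon$, yielding $|f_n(x)-f(x)|<2\epsilon$ uniformly. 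The preliminary reduction to a single direction of monotonicity is also handled correctly (discard a finite subsequence, or note that if both subsequences are infinite the limit is constant and treat each subsequence separately); in the paper's application this reduction is not even needed, since all the functions $f_\nn^\dag$ to which the lemma is applied are non-decreasing.
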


\begin{proof}[Proof of Theorem~{\rm\ref{th:main'}}]\hfill
\begin{enumerate}[nolistsep,leftmargin=*]
	\item $f^\dag$ is continuous on $(0,1)$ by Lemma~\ref{g_cont} and $f_\nn^\dag\to f^\dag$ everywhere in $(0,1)$ by Theorem~\ref{th:main}. Since the functions $f_\nn^\dag$ are continuous and non-decreasing on $(0,1)$, the thesis follows from Lemma~\ref{Dini}.
	\item Since $f$ is bounded from below on $\Omega$, we have $\inf_{\xx\in\Omega}f(\xx)=m\in\mathbb R$. For every $\epsilon>0$, the set $\{f<m+\epsilon\}$ is non-empty (by definition of~$m$) and open in $\Omega$ (because $f$ is continuous on $\Omega$),
	and so it has non-zero measure by Lemma~\ref{int_clos}. Hence, for every $\epsilon>0$,
\begin{align*}
&\frac{\mu_d\{f\le m+\epsilon\}}{\mu_d(\Omega)}=z_\epsilon>0,\qquad\frac{\mu_d\{f\le m-\epsilon\}}{\mu_d(\Omega)}=0\qquad\implies\qquad m+\epsilon \ge f^\dag(z_\epsilon)\ge  \inf_{y\in (0,1)} f^\dag(y)\ge m-\epsilon,
\end{align*}
and it follows that
\[ m =  \inf_{y\in (0,1)} f^\dag(y)=\lim_{y\to0}f^\dag(y)=f^\dag(0). \]
Now, the function $f^\dag$ is continuous on $[0,1)$ by Lemma~\ref{g_cont} and the definition $f^\dag(0)=\lim_{y\to0}f^\dag(y)$.
Since $f_\nn^\dag(0)=s_0$ is the evaluation of $f$ in a point of $\Omega$, for every $\nn$ we have 
\[ f_\nn^\dag(0) \ge \inf_{\xx\in\Omega}f(\xx)=m=f^\dag(0). \] 
Since $f^\dag_\nn\to f^\dag$ everywhere in $(0,1)$ by Theorem~\ref{th:main} and the functions $f_\nn^\dag$, $f^\dag$ are continuous and non-decreasing on $[0,1)$, for every $\epsilon>0$ we have
\[ f^\dag(0)\le \liminf_{\nn\to \infty} f_\nn^\dag(0) \le \limsup_{\nn\to \infty} f_\nn^\dag(0) \le \limsup_{\nn\to \infty} f_\nn^\dag(\epsilon) = f^\dag(\epsilon), \]
hence
\[ f^\dag(0) = \lim_{\nn\to \infty} f_\nn^\dag(0). \]
We have therefore proved that $f_\nn^\dag\to f^\dag$ everywhere in $[0,1)$, and the thesis now follows from Lemma~\ref{Dini}. 
	\item It is proved in the same way as the second statement.
	\item It follows immediately from the second and third statements. \qedhere
\end{enumerate}
\end{proof}

\section{The case of general measurable functions}\label{wagmf}

Sampling a function that is not continuous a.e.\ does not make sense in general. In particular, we cannot expect to obtain an easy-to-manage sequence converging to the monotone rearrangement of an arbitrary measurable function through a simple sampling procedure as the one described in Theorem~\ref{th:main}. This is due to the fact that quasi-uniform samples of an arbitrary measurable function may have nothing to do with the function itself.

\begin{example}\label{Dir_fun}
Consider the Dirichlet function $f:[0,1]\to\mathbb R$,
\[ f(x)=\left\{\begin{aligned}
&1, &&\textup{if }x\in[0,1]\cap\mathbb Q,\\
&0, &&\textup{otherwise}.
\end{aligned}\right. \]
The function $f$ is measurable and $f=0$ a.e.\ in $[0,1]$. The monotone rearrangement $f^\dag$ is the identically zero function. If we follow the construction of Theorem~\ref{th:main} using any a.u.\ grid in $[0,1]$ consisting of rational numbers (e.g., $\mathcal G_n=\{x_{i,n}\}_{i=1,\ldots,n}$ with $x_{i,n}=\frac in$), we obtain a sequence of functions $f^\dag_n$ which are identically equal to~$1$ for all~$n$. Hence, there is no point $x\in(0,1)$ such that $f^\dag_n(x)\to f^\dag(x)$. We remark that real-world computations are performed by computers and every possible a.u.\ grid that a computer can use consists of rational numbers.
\end{example}

In Theorem~\ref{th:main2}, we show that, for a general measurable function $f$, there exist ``good'' quasi-uniform samples of $f$ that allow one to obtain the monotone rearrangement of $f$ 
by the same construction as in Theorem~\ref{th:main}. Unfortunately, Theorem~\ref{th:main2} has only a theoretical interest, because in general there is no way to select ``good'' quasi-uniform samples for which the whole construction works. After all, it could not be otherwise, considering that a general measurable function $f$ can be modified on a set of zero measure without changing (the essence of) $f$ but with a tremendous impact on the pointwise evaluations of $f$; see also Example~\ref{Dir_fun}.


\begin{theorem}\label{th:main2}
Let $f:\Omega\subset\mathbb R^d\to\mathbb R$ be measurable on a regular set $\Omega$ with $\mu_d(\Omega)>0$. 
Take any $d$-dimensional rectangle $[\aa,\bb]$ containing $\Omega$. Then, there exists an a.u.\ grid $\mathcal G_\nn=\{\xx_{\ii,\nn}\}_{\ii=\bu,\ldots,\nn}$ in $[\aa,\bb]$ for which the construction of Theorem~{\rm\ref{th:main}} works, in the following sense.
\begin{quote}
For each $\nn\in\mathbb N^d$, consider the samples
\begin{equation*}
f(\xx_{\ii,\nn}),\qquad\ii\in\II_\nn(\Omega)=\{\ii\in\{\bu,\ldots,\nn\}:\xx_{\ii,\nn}\in\Omega\},
\end{equation*}
sort them in non-decreasing order, and put them into a vector $(s_0,\ldots,s_{\omega(\nn)})$, where $\omega(\nn)=\#\II_\nn(\Omega)-1$.
Let $f^\dag_\nn:[0,1]\to\mathbb R$ be the linear spline function that interpolates the samples $(s_0,\ldots,s_{\omega(\nn)})$ over the equally spaced nodes $(0,\frac{1}{\omega(\nn)},\frac{2}{\omega(\nn)},\ldots,1)$ in $[0,1]$. Then, $f^\dag_\nn\to f^\dag$ a.e.\ in $(0,1)$ as $\nn\to\infty$.
\end{quote}
\end{theorem}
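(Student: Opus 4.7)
The approach is to construct the grid using Lusin's theorem so that the samples $f(\xx_{\ii,\nn})$ effectively behave like samples of a continuous function approximating $f$. First, by an iterative application of Lusin's theorem, I would select nested compact sets $K_1\subset K_2\subset\cdots\subset\Omega$ with $\mu_d(\Omega\setminus K_k)<2^{-k}$ such that each restriction $f|_{K_k}$ is continuous; Tietze's extension theorem then provides continuous extensions $\tilde f_k:\mathbb R^d\to\mathbb R$ with $\tilde f_k=f$ on $K_k$. Next, fix a standard partition $\{I_{\ii,\nn}\}_{\ii=\bu,\ldots,\nn}$ of $[\aa,\bb]$ and an integer-valued function $k(\nn)\to\infty$ to be chosen later. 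The grid is defined by taking $\xx_{\ii,\nn}$ to be any point of $I_{\ii,\nn}\cap K_{k(\nn)}$ when this intersection is non-empty, and any point of $I_{\ii,\nn}$ otherwise; by the paragraph preceding Theorem~\ref{th:main}, the resulting $\mathcal G_\nn$ is automatically an a.u.\ grid in $[\aa,\bb]$.

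Exactly as in the proof of Theorem~\ref{th:main}, it is enough (via Lemma~\ref{ultimo} and Remark~\ref{chi_Omega}, which gives $\omega(\nn)\to\infty$) to establish, for every $F\in C_c(\mathbb R)$,
\[
\frac{1}{\#\II_\nn(\Omega)}\sum_{\ii\in\II_\nn(\Omega)}F(f(\xx_{\ii,\nn}))\;\longrightarrow\;\frac{1}{\mu_d(\Omega)}\int_\Omega F(f(\xx))\,d\xx.
\]
I would split the discrepancy as $E_1+E_2+E_3$, where $E_1$ is the sum-level error incurred by replacing $f$ with $\tilde f_{k(\nn)}$, $E_2$ is the Riemann-sum error for $F\circ\tilde f_{k(\nn)}$ on $\Omega$, and $E_3=\frac{1}{\mu_d(\Omega)}\int_\Omega[F(\tilde f_{k(\nn)})-F(f)]\,d\xx$. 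Because $\tilde f_{k(\nn)}=f$ on $K_{k(\nn)}$, $|E_3|\le 2\|F\|_\infty\mu_d(\Omega\setminus K_{k(\nn)})/\mu_d(\Omega)\to 0$. Similarly, $|E_1|$ is bounded by $2\|F\|_\infty$ times the fraction of ``bad'' indices $\ii\in\II_\nn(\Omega)$ with $\xx_{\ii,\nn}\notin K_{k(\nn)}$. Each such cell touches $\Omega$ but not $K_{k(\nn)}$, hence the union of bad cells lies in $\{\xx:d(\xx,\overline{\Omega})\le\delta_\nn\}\setminus K_{k(\nn)}$, where $\delta_\nn$ is the maximal cell side; by regularity of $\Omega$ (so $\mu_d(\partial\Omega)=0$) the measure of this set is at most $\mu_d(\Omega\setminus K_{k(\nn)})+o_\nn(1)$. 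Dividing by $\#\II_\nn(\Omega)\sim\mu_d(\Omega)N(\nn)/\mu_d([\aa,\bb])$ yields $E_1\to 0$.

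Finally, for $E_2$, Lemma~\ref{ilcor} applied to the bounded continuous function $F\circ\tilde f_k$ on the regular set $\Omega$ ensures, for each fixed $k$, that the Riemann-sum error tends to $0$ as $\nn\to\infty$. To handle all $F\in C_c(\mathbb R)$ simultaneously, I would fix a countable sup-norm-dense family $\{F_j\}_{j\ge 1}\subset C_c(\mathbb R)$ (using separability of $C_0(\mathbb R)$); for each $k$ choose a componentwise-increasing threshold $\nn_k$ so that the $E_2$-error for $F_j\circ\tilde f_k$ is at most $1/k$ whenever $\nn\ge\nn_k$ and $j\le k$; and set $k(\nn)=\max\{k:\nn_k\le\nn\}$, which forces $k(\nn)\to\infty$. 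The Riemann convergence then holds for every $F_j$, and a routine $\epsilon/3$-argument extends it to all $F\in C_c(\mathbb R)$. The main obstacle is precisely this diagonal choice of $k(\nn)$: it must grow fast enough that $E_1$ and $E_3$ vanish, yet slowly enough that $E_2$ still vanishes for the $k(\nn)$-dependent continuous function $\tilde f_{k(\nn)}$; the regularity of $\Omega$ enters exactly here, by making the bad-cell bound uniform in $k$.
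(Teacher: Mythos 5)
Your proposal is correct in essence, but it follows a genuinely different route from the paper's proof. The paper also starts from Lusin's theorem and builds $k$-dependent grids that prefer the ``good'' set, and it also uses a diagonal selection $m(\nn)\to\infty$ (via the technical Lemma~\ref{auxiliary_lemma}). After that, however, the two arguments diverge. The paper works at the level of step functions: it shows that the simple function $t_\nn$ built from the samples converges a.e.\ to $f$ (with domains $Q_\nn\to\Omega$), invokes the independently-stated Lemma~\ref{CH} (a stability theorem for distribution functions and quantile functions under a.e.\ convergence with varying domains) to conclude $t_\nn^\dag\to f^\dag$ at continuity points, and finally compares the step function $t_\nn^\dag$ with the linear spline $f_\nn^\dag$ by a two-sided squeeze. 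You instead stay entirely within the test-function framework of Lemma~\ref{ultimo}, exactly as in the proof of Theorem~\ref{th:main}: you reduce to the Riemann-sum statement $\frac1{\#\II_\nn(\Omega)}\sum_{\ii}F(f(\xx_{\ii,\nn}))\to\frac1{\mu_d(\Omega)}\int_\Omega F(f)$ and prove it via an $E_1+E_2+E_3$ decomposition, handling $E_2$ with a double diagonal over $k$ and over a countable sup-norm dense family $\{F_j\}\subset C_c(\mathbb R)$, then passing to general $F$ by a standard density argument (valid because the error functional is linear in $F$ and bounded by $2\|F\|_\infty$). Your route avoids proving Lemma~\ref{CH} and avoids the step-function-to-spline comparison, at the cost of a somewhat heavier diagonal bookkeeping; the paper's route is more modular and isolates Lemma~\ref{CH} as a result of independent interest.

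Two minor remarks. First, the nesting $K_1\subset K_2\subset\cdots$ you assert is not directly delivered by Lusin's theorem (a union of compacts on each of which $f$ is continuous need not have $f$ continuous on the union), but nesting is never actually used in your estimates, so you can simply drop that word. Second, your $E_1$ bound hinges on the measure of the union of ``bad'' cells being at most $\mu_d(\Omega\setminus K_{k(\nn)})+o_\nn(1)$ with the $o_\nn(1)$ term uniform in $k$; this is indeed the case because the correction term is $\mu_d\{\xx:\mathrm{dist}(\xx,\overline\Omega)\le\delta_\nn\}-\mu_d(\overline\Omega)$, which does not involve $k$ and tends to $0$ by continuity from above and the regularity of $\Omega$---worth spelling out, since it is the precise place where regularity makes the argument uniform in $k$, as you correctly observe.
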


For the proof of Theorem~\ref{th:main2}, we need two lemmas. The first one is just a technical result \cite[Lemma~4.1]{GLTbookII}. The second one, which has an interest also in itself, is a fine-tuned version of \cite[Propositions~1.1 and~4.3(iv)]{Chong-Rice} and \cite[Chapter~14, Propositions~3 and~5]{Fristedt}, 
with the only difference that the sequence $f_m$ considered here has a domain $\Omega_m$ that varies with the sequence index $m$. In what follows, if $\Omega,\Omega_m\subseteq\mathbb R^d$ are measurable sets, we say that $\Omega_m\to\Omega$ if the following conditions are satisfied.
\begin{itemize}[nolistsep,leftmargin=*]
	\item $\mu_d\{\xx\in\Omega:\xx\not\in\Omega_m\mbox{ i.o.}\}=0$.
	\item $\mu_d(\Omega_m\setminus\Omega)\to0$.
\end{itemize}
Note that $\{\xx\in\Omega:\xx\not\in\Omega_m\mbox{ i.o.}\}=\bigcap_{M=1}^\infty\bigcup_{m=M}^\infty(\Omega\setminus\Omega_m)=\limsup_{m\to\infty}(\Omega\setminus\Omega_m)$. In particular, if $\mu_d(\Omega)<\infty$, then
\begin{align*}
\mu_d\{\xx\in\Omega:\xx\not\in\Omega_m\mbox{ i.o.}\}&=\mu_d\Biggl(\bigcap_{M=1}^\infty\bigcup_{m=M}^\infty(\Omega\setminus\Omega_m)\Biggr)=\lim_{M\to\infty}\mu_d\Biggl(\bigcup_{m=M}^\infty(\Omega\setminus\Omega_m)\Biggr)\ge\limsup_{M\to\infty}\mu_d(\Omega\setminus\Omega_M),
\end{align*}
and the first condition implies that $\mu_d(\Omega\setminus\Omega_m)\to0$.
We remark that the first condition can be rewritten as $\mu_d\{\xx\in\Omega:\xx\in\Omega_m\mbox{ eventually}\}=\mu_d(\Omega)$ in the case where $\mu_d(\Omega)<\infty$.

\begin{lemma}\label{auxiliary_lemma}
For every $m\in\mathbb N$, let $\{\xi(m,\nn)\}_{\nn\in\mathbb N^d}$ be a family of numbers such that $\xi(m,\nn)\to 0$ as $\nn\to\infty$.
Then, there exists a family $\{m(\nn)\}_{\nn\in\mathbb N^d}\subseteq\mathbb N$ such that $m(\nn)\to\infty$ and $\xi(m(\nn),\nn)\to0$ as $\nn\to\infty$.
\end{lemma}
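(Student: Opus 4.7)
The statement is a standard diagonal-extraction result and the strategy is a textbook ``slow diagonal'' argument: for each $m$ the convergence $\xi(m,\nn)\to 0$ gives a threshold $K_m$ beyond which $\xi(m,\nn)$ is small; we then define $m(\nn)$ to be the largest $m$ whose threshold has been crossed by $\nn$.

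More concretely, the plan is as follows. First, for each $m\in\mathbb N$, use the hypothesis $\xi(m,\nn)\to 0$ as $\nn\to\infty$ (i.e.\ as $\min(\nn)\to\infty$) to pick an integer $K_m\in\mathbb N$ such that
\[ |\xi(m,\nn)|\le\frac1m\qquad\text{whenever }\min(\nn)\ge K_m. \]
By replacing $K_m$ with $\max(K_1,\ldots,K_m)+m$ if necessary, I may assume that $1\le K_1<K_2<K_3<\cdots$ and $K_m\to\infty$.

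Next, for every $\nn\in\mathbb N^d$ define
\[ m(\nn)=\max\{m\in\mathbb N:K_m\le\min(\nn)\}, \]
with the convention $m(\nn)=1$ in the (finitely many) cases where $\min(\nn)<K_1$. Because the $K_m$ are strictly increasing and tend to infinity, this maximum exists and is finite, and moreover for every fixed $M\in\mathbb N$ the condition $\min(\nn)\ge K_M$ forces $m(\nn)\ge M$; hence $m(\nn)\to\infty$ as $\nn\to\infty$. Finally, for all $\nn$ large enough that $\min(\nn)\ge K_1$, the very definition of $m(\nn)$ gives $\min(\nn)\ge K_{m(\nn)}$, and therefore
\[ |\xi(m(\nn),\nn)|\le\frac1{m(\nn)}\longrightarrow 0\qquad\text{as }\nn\to\infty, \]
which is the desired conclusion.

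There is really no obstacle: the only mildly subtle point is using the convention $\nn\to\infty\iff\min(\nn)\to\infty$ correctly when turning the coordinate-wise hypothesis into the existence of a scalar threshold $K_m$, and then ensuring the thresholds are strictly increasing so that $m(\nn)$ is forced to grow. Once those two bookkeeping steps are handled, the estimate $|\xi(m(\nn),\nn)|\le 1/m(\nn)$ closes the argument immediately.
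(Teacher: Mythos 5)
Your proof is correct, and it is the standard diagonal-extraction argument. The paper itself does not prove this lemma --- it is quoted from \cite[Lemma~4.1]{GLTbookII} --- and the argument there is essentially what you wrote: choose for each $m$ a threshold on $\min(\nn)$ past which $|\xi(m,\nn)|\le 1/m$, make the thresholds strictly increasing, and let $m(\nn)$ be the largest index whose threshold $\min(\nn)$ has cleared. One cosmetic slip: for $d\ge 2$ the set $\{\nn\in\mathbb N^d:\min(\nn)<K_1\}$ is \emph{infinite} (take $\nn=(1,k,\ldots)$), so the parenthetical ``(finitely many)'' is wrong; this is harmless, since $\nn\to\infty$ means $\min(\nn)\to\infty$ and hence $\min(\nn)\ge K_1$ eventually, which is all the argument uses.
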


\begin{lemma}\label{CH}
Let $f:\Omega\subset\mathbb R^d\to\mathbb R$ be measurable on a set $\Omega$ with $0<\mu_d(\Omega)<\infty$, and let $f_m:\Omega_m\subset\mathbb R^d\to\mathbb R$ be measurable on a set $\Omega_m$ with $0<\mu_d(\Omega_m)<\infty$. Suppose that $\Omega_m\to\Omega$ and $f_m(\xx)\to f(\xx)$ for a.e.\ $\xx\in\Omega$. Then, 
\begin{enumerate}[nolistsep,leftmargin=*]
	\item $F_{f_m}(u)\to F_f(u)$ for every continuity point $u$ of $F_f$,
	\item $f_m^\dag(y)\to f^\dag(y)$ for every continuity point $y$ of $f^\dag$.
\end{enumerate}
\end{lemma}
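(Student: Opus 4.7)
The plan is to prove statement 1 first by dominated convergence, then deduce statement 2 from statement 1 via a standard quantile/distribution duality argument, exploiting that the continuity points of $F_f$ are dense in $\mathbb R$.

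For statement 1, fix a continuity point $u$ of $F_f$, so that $\mu_d\{f=u\}=0$. I would write
\[
\mu_d(\Omega_m)\,F_{f_m}(u)=\mu_d(\Omega_m\cap\{f_m\le u\})=\int_\Omega\chi_{\Omega_m\cap\{f_m\le u\}}(\xx)\,{\rm d}\xx+\int_{\Omega_m\setminus\Omega}\chi_{\{f_m\le u\}}(\xx)\,{\rm d}\xx.
\]
The second integral is bounded by $\mu_d(\Omega_m\setminus\Omega)$, which tends to $0$ by the second defining condition of $\Omega_m\to\Omega$. For the first, the first defining condition guarantees that a.e.\ $\xx\in\Omega$ belongs to $\Omega_m$ for all $m$ large enough, so $f_m(\xx)$ is eventually defined and, by hypothesis, $f_m(\xx)\to f(\xx)$; together with $\mu_d\{f=u\}=0$, this yields $\chi_{\Omega_m\cap\{f_m\le u\}}(\xx)\to\chi_{\{f\le u\}}(\xx)$ for a.e.\ $\xx\in\Omega$. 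Since the integrands are dominated by $\chi_\Omega\in L^1(\mathbb R^d)$, the dominated convergence theorem yields convergence of the first integral to $\mu_d(\Omega\cap\{f\le u\})=\mu_d(\Omega)\,F_f(u)$. Using $\mu_d(\Omega_m)=\mu_d(\Omega_m\cap\Omega)+\mu_d(\Omega_m\setminus\Omega)\to\mu_d(\Omega)$, dividing gives $F_{f_m}(u)\to F_f(u)$.

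For statement 2, fix a continuity point $y$ of $f^\dag$. The set of discontinuities of the monotone function $F_f$ is at most countable, so its continuity points are dense in $\mathbb R$. I will use the standard relation $f^\dag(t)\le s\iff F_f(s)\ge t$, which follows from the definition of $f^\dag$ together with the right-continuity of $F_f$. Given $\epsilon>0$, by continuity of $f^\dag$ at $y$ pick $\delta>0$ so small that $f^\dag(y+\delta)<f^\dag(y)+\epsilon$, then choose continuity points $u',u''$ of $F_f$ with $f^\dag(y)-\epsilon<u'<f^\dag(y)$ and $f^\dag(y+\delta)\le u''<f^\dag(y)+\epsilon$. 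The duality gives $F_f(u'')\ge y+\delta>y$ and $F_f(u')<y$ (the strict inequality on the left is automatic since $u'<f^\dag(y)$, while on the right it requires the continuity of $f^\dag$ at $y$). By statement 1, eventually $F_{f_m}(u'')>y$ and $F_{f_m}(u')<y$, which by the same duality translates into $u'<f_m^\dag(y)\le u''$, hence $|f_m^\dag(y)-f^\dag(y)|<\epsilon$ for all $m$ large enough.

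The main obstacle I anticipate is the careful bookkeeping in statement 1: both conditions of $\Omega_m\to\Omega$ must be deployed, one to guarantee a.e.\ pointwise convergence of the indicators on $\Omega$ (so that dominated convergence is applicable), the other to absorb the integral over the symmetric difference $\Omega_m\setminus\Omega$. In statement 2, the subtlety is that the ``$\ge y$'' output of the duality must be upgraded to a strict ``$>y$'' on the right-hand side of $f^\dag(y)$; this is precisely where the continuity of $f^\dag$ at $y$ enters, since it lets us push $u''$ slightly above $f^\dag(y)$ while still having $F_f(u'')\ge y+\delta$ for some $\delta>0$.
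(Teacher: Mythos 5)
Your proof is correct, and the two parts compare differently to the paper's. For statement~2 you use essentially the same argument as the paper: the Galois connection $f^\dag(t)\le s\iff F_f(s)\ge t$ (valid by right-continuity of $F_f$), density of continuity points of the monotone function $F_f$, and continuity of $f^\dag$ at $y$ to squeeze $f_m^\dag(y)$ between two well-chosen levels $u'<u''$; the paper just splits this into separate $\liminf$ and $\limsup$ estimates, while you pick both test levels simultaneously, but the mechanism is identical.

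For statement~1, however, you take a genuinely different route. The paper fixes $\delta$ with $F_f(u+\delta)-F_f(u-\delta)\le\epsilon$, shows $\mu_d\{\xx\in\Omega\cap\Omega_m:|f(\xx)-f_m(\xx)|\ge\delta\}\to0$, and then propagates these tolerances through a chain of set inclusions $\{f_m\le u\}\subseteq\{f\le u+\delta\}\cup\{|f-f_m|\ge\delta\}$ (and its reverse) to trap $F_{f_m}(u)$ between quantities tending to $F_f(u)$. You instead write $\mu_d(\Omega_m)F_{f_m}(u)$ as an integral of indicators, split it over $\Omega$ and $\Omega_m\setminus\Omega$, observe that $u$ being a continuity point of $F_f$ is exactly $\mu_d\{f=u\}=0$ so that for a.e.\ $\xx\in\Omega$ the first defining condition of $\Omega_m\to\Omega$ plus $f_m(\xx)\to f(\xx)\ne u$ forces $\chi_{\Omega_m\cap\{f_m\le u\}}(\xx)\to\chi_{\{f\le u\}}(\xx)$, and then invoke dominated convergence with dominator $\chi_\Omega\in L^1$. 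Your version is shorter and avoids the explicit $\delta$-tolerance bookkeeping; the paper's version is more elementary in that it only quotes convergence in measure rather than the dominated convergence theorem, and the inclusion chain makes the role of each hypothesis very explicit, but both yield the same estimate $\mu_d(\Omega_m)\to\mu_d(\Omega)$ and hence the claim. No gaps: your use of $\mu_d\{f=u\}=0$, the eventual membership $\xx\in\Omega_m$, and the absorption of $\mu_d(\Omega_m\setminus\Omega)$ are exactly the three places where the hypotheses are needed, and you deploy them correctly.
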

\begin{proof}\hfill
\begin{enumerate}[nolistsep,leftmargin=*]
\item Let $u\in\mathbb R$ be a continuity point for the distribution function $F_f$ and let $\epsilon>0$. Then, there exists $\delta=\delta_{u,\epsilon}>0$ such that
\[ F_f(u+\delta)-F_f(u-\delta)\le\epsilon. \]
Since $\Omega_m\to\Omega$ and $f_m(\xx)\to f(\xx)$ for a.e.\ $\xx\in\Omega$, we have
\[ \lim_{m\to\infty}\mu_d\{\xx\in\Omega\cap\Omega_m:|f(\xx)-f_m(\xx)|\ge\delta\}=0. \]
Indeed, if $\tilde f_m$ is any extension of $f_m$ to $\mathbb R^d$, then $\tilde f_m(\xx)\to f(\xx)$ for a.e.\ $\xx\in\Omega$ and
\begin{align*}
\mu_d\{\xx\in\Omega\cap\Omega_m:|f(\xx)-f_m(\xx)|\ge\delta\}&\le\mu_d\{\xx\in\Omega:|f(\xx)-\tilde f_m(\xx)|\ge\delta\}\to0.
\end{align*}
As a consequence, there exists $M=M_\delta$ such that, for $m\ge M$,
\[ \mu_d\{\xx\in\Omega\cap\Omega_m:|f(\xx)-f_m(\xx)|\ge\delta\}+\mu_d(\Omega\setminus\Omega_m)+\mu_d(\Omega_m\setminus\Omega)\le\epsilon. \]
Note that
\begin{align*}
\{\xx\in\Omega\cap\Omega_m:f_m(\xx)\le u\}&\subseteq\{\xx\in\Omega\cap\Omega_m:f(\xx)-|f(\xx)-f_m(\xx)|\le u\}\\
&=\{\xx\in\Omega\cap\Omega_m:|f(\xx)-f_m(\xx)|<\delta,\ f(\xx)-|f(\xx)-f_m(\xx)|\le u\}\\
&\hphantom{=\{}\,\cup\{\xx\in\Omega\cap\Omega_m:|f(\xx)-f_m(\xx)|\ge\delta,\ f(\xx)-|f(\xx)-f_m(\xx)|\le u\}\\
&\subseteq\{\xx\in\Omega\cap\Omega_m:f(\xx)\le u+\delta\}\cup\{\xx\in\Omega\cap\Omega_m:|f(\xx)-f_m(\xx)|\ge\delta\}
\end{align*}
and similarly
\[ \{\xx\in\Omega\cap\Omega_m:f(\xx)\le u-\delta\}\subseteq\{\xx\in\Omega\cap\Omega_m:f_m(\xx)\le u\}\cup\{\xx\in\Omega\cap\Omega_m:|f(\xx)-f_m(\xx)|\ge\delta\}. \]
Thus, for $m\ge M$,
\begin{align*}
F_{f_m}(u)&=\frac{\mu_d\{\xx\in\Omega_m:f_m(\xx)\le u\}}{\mu_d(\Omega_m)}\le\frac{\mu_d\{\xx\in\Omega_m\cap\Omega:f_m(\xx)\le u\}+\mu_d(\Omega_m\setminus\Omega)}{\mu_d(\Omega_m)}\\
&\le\frac{\mu_d\{\xx\in\Omega_m\cap\Omega:f(\xx)\le u+\delta\}+\mu_d\{\xx\in\Omega_m\cap\Omega:|f(\xx)-f_m(\xx)|\ge\delta\}+\mu_d(\Omega_m\setminus\Omega)}{\mu_d(\Omega_m)}\\
&\le F_f(u+\delta)\frac{\mu_d(\Omega)}{\mu_d(\Omega_m)}+\frac{\epsilon}{\mu_d(\Omega_m)}\le(F_f(u-\delta)+\epsilon)\frac{\mu_d(\Omega)}{\mu_d(\Omega)-\mu_d(\Omega\setminus\Omega_m)}+\frac{\epsilon}{\mu_d(\Omega)-\mu_d(\Omega\setminus\Omega_m)}\\
&\le(F_f(u)+\epsilon)\frac{\mu_d(\Omega)}{\mu_d(\Omega)-\epsilon}+\frac{\epsilon}{\mu_d(\Omega)-\epsilon}
\end{align*}
and
\begin{align*}
F_f(u)&\le F_f(u+\delta)\le F_f(u-\delta)+\epsilon=\frac{\mu_d\{\xx\in\Omega:f(\xx)\le u-\delta\}}{\mu_d(\Omega)}+\epsilon\\
&\le\frac{\mu_d\{\xx\in\Omega\cap\Omega_m:f(\xx)\le u-\delta\}+\mu_d(\Omega\setminus\Omega_m)}{\mu_d(\Omega)}+\epsilon\\
&\le\frac{\mu_d\{\xx\in\Omega\cap\Omega_m:f_m(\xx)\le u\}+\mu_d\{\xx\in\Omega\cap\Omega_m:|f(\xx)-f_m(\xx)|\ge\delta\}+\mu_d(\Omega\setminus\Omega_m)}{\mu_d(\Omega)}+\epsilon\\
&\le F_{f_m}(u)\frac{\mu_d(\Omega_m)}{\mu_d(\Omega)}+\frac{\epsilon}{\mu_d(\Omega)}+\epsilon\le F_{f_m}(u)\frac{\mu_d(\Omega)+\mu_d(\Omega_m\setminus\Omega)}{\mu_d(\Omega)}+\frac{\epsilon}{\mu_d(\Omega)}+\epsilon\\
&\le F_{f_m}(u)\frac{\mu_d(\Omega)+\epsilon}{\mu_d(\Omega)}+\frac{\epsilon}{\mu_d(\Omega)}+\epsilon.
\end{align*}
In conclusion, for $m\ge M$ we can write $\varphi(\epsilon)\le F_{f_m}(u)\le\psi(\epsilon)$, where both $\varphi(\epsilon)$ and $\psi(\epsilon)$ tend to $F_f(u)$ as $\epsilon\to0$. This proves that $F_{f_m}(u)\to F_f(u)$.

\item Let $y\in(0,1)$ be a continuity point for $f^\dag$. Since $F_f$ is monotone, its continuity points are dense in its domain~$\mathbb R$. As a consequence, $u=f^\dag(y)-\delta$ is a continuity point of $F_f$ for arbitrarily small $\delta>0$. From the definition of $f^\dag(y)$,
\[ F_f(f^\dag(y)-\delta)<y. \]
By the first statement of the lemma, $F_{f_m}(u)\to F_f(u)$ and so we eventually have
\[ F_{f_m}(f^\dag(y)-\delta)<y\quad\implies\quad f^\dag_m(y)\ge f^\dag(y)-\delta. \]
Thus,
\[ \liminf_{m\to\infty}f^\dag_m(y)\ge f^\dag(y)-\delta. \]
Since we can take $\delta$ arbitrarily small, we obtain
\[ \liminf_{m\to\infty}f^\dag_m(y)\ge f^\dag(y). \]
Now, fix $\epsilon>0$ and let $\delta>0$ such that $v=f^\dag(y+\epsilon)+\delta$ is a continuity point for $F_f$.
From the definition of $f^\dag(y+\epsilon)$ and the monotonicity of $F_f$,
\[ F_f(f^\dag(y+\epsilon)+\delta)\ge y+\epsilon. \]
By the first statement of the lemma, $F_{f_m}(v)\to F_f(v)$ and so we eventually have
\[ F_{f_m}(f^\dag(y+\epsilon)+\delta)\ge y\quad\implies\quad f^\dag_m(y)\le f^\dag(y+\epsilon)+\delta. \]
Thus,
\[ \limsup_{m\to\infty}f^\dag_m(y)\le f^\dag(y+\epsilon)+\delta. \]
Taking first the limit as $\delta\to0$ and then the limit as $\epsilon\to0$, by the continuity of $f^\dag$ in $y$ we obtain
\[ \limsup_{m\to\infty}f^\dag_m(y)\le f^\dag(y). \]
We then conclude that $f^\dag_m(y)\to f^\dag(y)$. \qedhere
\end{enumerate}
\end{proof}


\begin{proof}[Proof of Theorem~{\rm\ref{th:main2}}]
By Lusin's theorem \cite[Theorem~2.24]{Rudinone}, for every $m\in\mathbb N$ we can find a continuous function $f_m:[\aa,\bb]\to\mathbb R$ such that
\[ \mu_d(E_m)\le 2^{-m},\qquad E_m=\{\xx\in\Omega:f_m(\xx)\ne f(\xx)\}. \]
Since $\sum_{m=1}^\infty2^{-m}<\infty$, from the Borel--Cantelli lemma \cite[Theorem~1.41]{Rudinone} we infer that $f_m(\xx)$ is eventually equal to $f(\xx)$ for a.e.\ $\xx\in\Omega$, say for all $\xx\in\hat\Omega$ with $\mu_d(\hat\Omega)=\mu_d(\Omega)$. 

Now, fix $m\in\mathbb N$ and let
\[ \Omega_m=\Omega\setminus E_m=\{\xx\in\Omega:f_m(\xx)=f(\xx)\}. \]
For every $\nn\in\mathbb N^d$, choose a standard partition $\{I_{\ii,\nn}\}_{\ii=\bu,\ldots,\nn}$ of $[\aa,\bb]$
and define the grid $\mathcal G_{m,\nn}=\{\xx_{\ii,\nn}^{(m)}\}_{\ii=\bu,\ldots,\nn}$, where $\xx_{\ii,\nn}^{(m)}$ is a point in $I_{\ii,\nn}$ selected so that the following properties are satisfied:
\begin{equation*}
\left\{\begin{aligned}
&\xx_{\ii,\nn}^{(m)}\in\Omega_m, &&\mbox{if }\Omega_m\cap I_{\ii,\nn}\ne\emptyset,\\
&\xx_{\ii,\nn}^{(m)}\not\in E_m, &&\mbox{if }\Omega_m\cap I_{\ii,\nn}=\emptyset\ \wedge\ I_{\ii,\nn}\not\subseteq E_m. 
\end{aligned}\right.
\end{equation*}%
\begin{figure}
\centering
\includegraphics[width=0.45\textwidth]{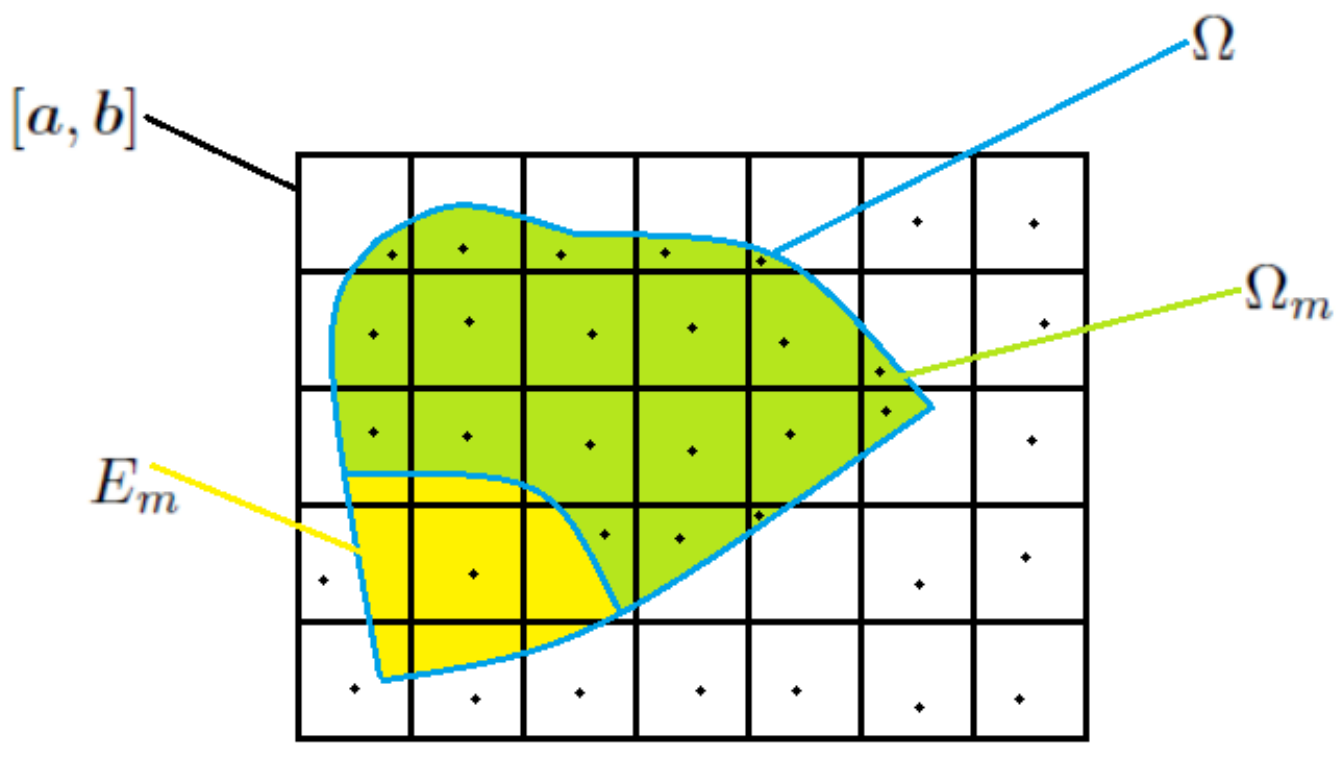}
\caption{Illustration for the proof of Theorem~\ref{th:main2}.}
\label{grid_choice}
\end{figure}%
In other words, whenever possible we take the grid point $\xx_{\ii,\nn}^{(m)}$ inside $\Omega_m$, and if it is not possible then we take $\xx_{\ii,\nn}^{(m)}$ outside $\Omega$ (provided that $I_{\ii,\nn}\not\subseteq\Omega$); see Figure~\ref{grid_choice}. In this way, $\xx_{\ii,\nn}^{(m)}\in E_m$ if and only if $I_{\ii,\nn}$ is entirely contained in $E_m$. Note that the grid $\mathcal G_{m,\nn}$ is a.u.\ in $[\aa,\bb]$ (regardless of $m$) because $\xx_{\ii,\nn}^{(m)}\in I_{\ii,\nn}$.

Define
\[ s_{m,\nn}(\xx)=\sum_{\ii=\bu}^\nn f_m(\xx_{\ii,\nn}^{(m)})\chi_{I_{\ii,\nn}}(\xx),\qquad\xx\in[\aa,\bb]. \]
It is not difficult to see that $s_{m,\nn}\to f_m$ uniformly on $[\aa,\bb]$ as $\nn\to\infty$. Indeed, for every $\xx\in[\aa,\bb]$, if we denote by $I_{\kk,\nn}$ the unique element of the standard partition containing $\xx$ and by $\omega_{f_m}$ the modulus of continuity of $f_m$ on $[\aa,\bb]$, then
\[ |s_{m,\nn}(\xx)-f_m(\xx)|=|f_m(\xx_{\kk,\nn}^{(m)})-f_m(\xx)|\le\omega_{f_m}\biggl(\biggl\|\frac{\bb-\aa}{\nn}\biggr\|_\infty\biggr), \]
which is independent of $\xx$ and tends to $0$ as $\nn\to\infty$ due to the continuity of $f_m$ on $[\aa,\bb]$. As a consequence, by Lemma~\ref{auxiliary_lemma} applied with $\xi(m,\nn)=\|s_{m,\nn}-f_m\|_{\infty,[\aa,\bb]}$, there exists a family $\{m(\nn)\}_{\nn\in\mathbb N^d}\subseteq\mathbb N$ such that $m(\nn)\to\infty$ and $\|s_\nn-f_{m(\nn)}\|_{\infty,[\aa,\bb]}\to0$ as $\nn\to\infty$, where $s_\nn=s_{m(\nn),\nn}$. This is enough to conclude that 
\[ s_\nn(\xx)=\sum_{\ii=\bu}^\nn f_{m(\nn)}(\xx_{\ii,\nn})\chi_{I_{\ii,\nn}}(\xx)\to f(\xx) \]
for every $\xx\in\hat\Omega$, where $\xx_{\ii,\nn}=\xx_{\ii,\nn}^{(m(\nn))}$. Indeed, for every $\xx\in\hat\Omega$, we have
\begin{align*}
|s_\nn(\xx)-f(\xx)|&\le|s_\nn(\xx)-f_{m(\nn)}(\xx)|+|f_{m(\nn)}(\xx)-f(\xx)|\le\|s_\nn-f_{m(\nn)}\|_{\infty,[\aa,\bb]}+|f_{m(\nn)}(\xx)-f(\xx)|,
\end{align*}
where the first term in the right-hand side tends to $0$ as $\nn\to\infty$ and the second term in the right-hand side is eventually equal to $0$ as $\nn\to\infty$.

Now, define $\mathcal G_\nn=\{\xx_{\ii,\nn}\}_{\ii=\bu,\ldots,\nn}$ and note that $\mathcal G_\nn$ is a.u.\ in $[\aa,\bb]$. We show that the thesis of the theorem holds for the grid $\mathcal G_\nn$.
Let $\II_\nn(\Omega)=\{\ii\in\{\bu,\ldots,\nn\}:\xx_{\ii,\nn}\in\Omega\}$ as in the statement of the theorem, and let
\[ t_\nn(\xx)=\sum_{\ii\in\II_\nn(\Omega)}f(\xx_{\ii,\nn})\chi_{I_{\ii,\nn}}(\xx),\qquad\xx\in Q_\nn=\bigcup_{\ii\in\II_\nn(\Omega)}I_{\ii,\nn}. \]
The domain $Q_\nn$ of $t_\nn$ contains $\Omega_{m(\nn)}$ by construction. If $\xx\in\hat\Omega$ then $f_{m(\nn)}(\xx)=f(\xx)$ eventually as $\nn\to\infty$ (i.e., $\xx\in\Omega_{m(\nn)}$ eventually as $\nn\to\infty$), hence
\[ \mu_d\{\xx\in\Omega:\xx\in Q_\nn\mbox{ eventually as }\nn\to\infty\}=\mu_d(\Omega). \]
Moreover, when $\ii$ varies in $\II_\nn(\Omega)$, the elements $I_{\ii,\nn}$ that are not fully contained in $\Omega$ contain anyway a point $\xx_{\ii,\nn}\in\Omega$ and so they necessarily intersect the boundary $\partial\Omega$ (e.g., in a point $\boldsymbol z$ of the segment connecting $\xx_{\ii,\nn}$ with a point $\boldsymbol y\in I_{\ii,\nn}\setminus\Omega$). In particular, the union of the elements $I_{\ii,\nn}$, $\ii\in\II_\nn(\Omega)$, that are not fully contained in $\Omega$ is contained in the set
\[ \Gamma_\nn=\biggl\{\xx\in\mathbb R^d:{\rm dist}(\xx-\partial\Omega)\le\biggl\|\frac{\bb-\aa}{\nn}\biggr\|_\infty\biggr\}, \]
where ${\rm dist}(\xx,\partial\Omega)$ is the distance of $\xx$ from $\partial\Omega$ (in $\infty$-norm).
Since $\mu_d(\partial\Omega)=0$ (because $\Omega$ is regular), we obtain
\[ \mu_d(Q_\nn\setminus\Omega)\le \mu_d\Biggl(\bigcup_{\ii\in\II_\nn(\Omega)\,:\,I_{\ii,\nn}\not\subseteq\Omega}I_{\ii,\nn}\Biggr)\le \mu_d(\Gamma_\nn)\to \mu_d(\partial\Omega)=0\mbox{ \,as \,}\nn\to\infty. \]
We have therefore proved that $Q_\nn\to\Omega$. Now, if $\xx\in\hat\Omega$, we have $f_{m(\nn)}(\xx)=f(\xx)$ eventually as $\nn\to\infty$ and
\[ |t_\nn(\xx)-f(\xx)|\le|t_\nn(\xx)-s_\nn(\xx)|+|s_\nn(\xx)-f(\xx)|, \]
where the first term in the right-hand side is eventually equal to $0$ as $\nn\to\infty$ and the second term in the right-hand side tends to $0$ as $\nn\to\infty$. In conclusion,
\[ t_\nn(\xx)\to f(\xx) \]
for all $\xx\in\hat\Omega$, and so $t_\nn\to f$ for a.e.\ $\xx\in\Omega$. We can now apply Lemma~\ref{CH} to conclude that $t_\nn^\dag(y)\to f^\dag(y)$ for every continuity point $y$ of $f^\dag$.

Since $t_\nn^\dag$ is the monotone rearrangement of the simple function $t_\nn$ and all the elements $I_{\ii,\nn}$ have the same measure regardless of $\ii$, we infer that $t_\nn^\dag$ is again a (left-continuous) simple function. More precisely, $t_\nn^\dag$ is given by
\[ t_\nn^\dag(y) = \sum_{i=0}^{\omega(\nn)}s_i\,\chi_{\left(\frac{i}{\omega(\nn)^{\vphantom{1}}+1},\frac{i+1}{\omega(\nn)^{\vphantom{1}}+1}\right]}(y),\qquad y\in(0,1), \]
where $\omega(\nn)=\#\II_\nn(\Omega)-1$ and $s_0,\ldots,s_{\omega(\nn)}$ are the samples $f(\xx_{\ii,\nn})$, $\ii\in\II_\nn(\Omega)$, sorted in non-decreasing order, as in the statement of the theorem. Note that the linear spline function $f^\dag_\nn$ mentioned in the statement of the theorem is explicitly given by
\begin{align*}
\left\{\begin{aligned}
&f^\dag_\nn\Bigl(\frac i{\omega(\nn)}\Bigr)=s_i, &&i=0,\ldots,\omega(\nn),\\
&f^\dag_\nn\mbox{ linear on }\biggl[\frac i{\omega(\nn)},\frac{i+1}{\omega(\nn)}\biggr], &&i=0,\ldots,\omega(\nn)-1,
\end{aligned}\right.
\end{align*}
so $f_\nn^\dag$ is ``close'' to $t_\nn^\dag$. In particular, denoting again with $t_\nn^\dag$ the obvious extension of $t_\nn^\dag$ to $[0,1]$ obtained by setting $t_\nn^\dag(0)=s_0$ and $t_\nn^\dag(1)=s_{\omega(\nn)}$, and observing that $\frac i{\omega(\nn)+1}<\frac i{\omega(\nn)}<\frac{i+1}{\omega(\nn)+1}$ for all $i=1,\ldots,\omega(\nn)-1$, we have
\[ 
s_i=f_\nn^\dag\Bigl(\frac{i}{\omega(\nn)}\Bigr)=t_\nn^\dag\Bigl(\frac{i}{\omega(\nn)}\Bigr),\qquad i=0,\ldots,\omega(\nn). \]
Now, let $y\in(0,1)$ be a continuity point for $f^\dag$ and let $\epsilon>0$. 
Take $\delta=\delta_{y,\epsilon}>0$ such that 
\[ \left\{\begin{aligned}
&f^\dag(y+\delta) - f^\dag(y-\delta)\le\epsilon,\\
&y+\delta,\ y-\delta\in(0,1)\mbox{ \,are continuity points for \,}f^\dag.
\end{aligned}\right. \]
Note that such a $\delta$ exists because $y$ is a continuity point of $f^\dag$ and the set of discontinuity points of $f^\dag$ is countable as $f^\dag$ is monotone.
Since $\omega(\nn)\to\infty$ as $\nn\to\infty$, we eventually have
\[ y-\delta\le\frac{i_\nn}{\omega(\nn)}\le y\le\frac{i_\nn +1}{\omega(\nn)}\le y+\delta \]
for some index $i_\nn\in\{0,\ldots,\omega(\nn)-1\}$, and
\[ t_\nn^\dag(y-\delta)\le t_\nn^\dag\Bigl(\frac{i_\nn}{\omega(\nn)} \Bigr) = f_\nn^\dag \Bigl( \frac{i_\nn}{\omega(\nn)} \Bigr) \le f_\nn^\dag(y) \le f_\nn^\dag\Bigl( \frac{i_\nn +1}{\omega(\nn)} \Bigr) =t_\nn^\dag \Bigl( \frac{i_\nn +1}{\omega(\nn)} \Bigr) \le t_\nn^\dag (y+\delta). \]
Since $t_\nn^\dag(z)\to f^\dag(z)$ for every continuity point $z$ of $f^\dag$, we infer that $t_\nn^\dag(y-\delta)\to f^\dag(y-\delta)$ and $t_\nn^\dag(y+\delta)\to f^\dag(y+\delta)$. Hence,
\begin{align*}
f^\dag(y)-\epsilon&\le f^\dag(y+\delta)-\epsilon\le f^\dag(y-\delta) = \lim_{\nn\to\infty}	t_\nn^\dag (y-\delta)\le \liminf_{\nn\to\infty} f_\nn^\dag(y),\\
\limsup_{\nn\to\infty} f_\nn^\dag(y)&\le \lim_{\nn\to\infty}	t_\nn^\dag (y+\delta) = f^\dag(y+\delta) \le f^\dag(y-\delta)+\epsilon \le f^\dag(y) +\epsilon.
\end{align*}
This is true for every $\epsilon>0$ and so we conclude that $f_\nn^\dag(y)\to f^\dag(y)$.
\end{proof}

\section*{Acknowledgements}
The first and third authors are members of the Research Group GNCS (Gruppo Nazionale per il Calcolo Scientifico) of INdAM (Istituto Nazionale di Alta Matematica).
The second author is member of the Research Group GNAMPA (Gruppo Nazionale per l'Analisi Matematica, la Probabilit\`a e le loro Applicazioni) of INdAM. 
This work has been supported by the MIUR Excellence Department Project awarded to the Department of Mathematics of the University of Rome Tor Vergata (CUP E83C18000100006) and by the Beyond Borders Programme of the University of Rome Tor Vergata through the Project ASTRID (CUP E84I19002250005).

\end{document}